\DeclarePairedDelimiter\floor{\lfloor}{\rfloor}
\DeclarePairedDelimiterX{\norm}[1]{\lVert}{\rVert}{#1}
\newcommand\set[1]{\left\{ #1 \right\}}
\DeclareMathOperator{\E}{\mathbb{E}} 
\DeclareMathOperator{\Prb}{\mathbb{P}} 
\newtheorem{thm}{Theorem}
\newtheorem{defi}{Definition}
\newtheorem{lem}{Lemma}
\newtheorem{rmk}{Remark}
\begin{document}
\title{\textbf{A survey on combinatorial optimization}}
\author{\textbf{Phuong Le}}
%\date{}
\maketitle

\begin{abstract}
This survey revisits classical combinatorial optimization algorithms and extends them to two-stage stochastic models, particularly focusing on client-element problems. We reformulate these problems to optimize element selection under uncertainty and present two key sampling algorithms: \texttt{SSA} and \texttt{Boost-and-Sample}, highlighting their performance guarantees. Additionally, we explore correlation-robust optimization, introducing the concept of the correlation gap, which enables approximations using independent distributions with minimal accuracy loss. This survey analyzes and presents foundational combinatorial optimization methods for researchers at the intersection of this field and reinforcement learning.
\end{abstract}

\section{Introduction}
Combinatorial optimization is a crucial problem with applications across various domains. The recent advancements in machine learning, particularly in reinforcement learning (RL), have revealed strong connections between combinatorial optimization and RL, as suggested in \cite{RL_for_CO}. Notably, RL \cite{RL_for_CO_2} has been used to train neural networks to solve complex problems like the Traveling Salesman Problem (TSP). Although most combinatorial optimization problems are discrete, a recent paper by \cite{dpo} connects traditional discrete reinforcement learning with Hamiltonian-based continuous dynamical systems, further bridging the gap with continuous-time physics-based formulations.

Given the potential to advance combinatorial optimization and its integration with other fields, this survey revisits classical algorithms in this domain. The aim is to provide a foundation that future researchers can build upon to develop machine learning-based approaches. In this paper, we reformulate many classical combinatorial optimization problems as client-element problems: given two sets (clients and elements), the task is to select elements optimally to serve a subset of clients. We then define the two-stage stochastic version of these deterministic problems, where randomness arises from the set of clients, which will be defined shortly. In the first stage, elements are purchased in advance at a lower cost, and in the second stage, once the set of clients is realized, additional elements are acquired if necessary. The goal is to optimize the selection of elements in both stages.

To solve these stochastic versions, we rely on the (approximate) solutions of their deterministic counterparts using sampling methods. This survey presents two sampling algorithms: \texttt{Boost-and-Sample} (see \cite{gupta_pal_ravi_sinha_2004}) and its variant \texttt{Ind-Boost}, along with \texttt{SSA} (see \cite{swamy_shmoys}). We also provide brief proofs of their performance guarantees, showing that under certain assumptions, the outputs of these algorithms are within a constant factor of the optimal solution.

A challenge in sampling methods is that the distribution $\pi$ on the random subset of clients is often unknown and can be highly complex. Both algorithms require an efficient black-box method to draw samples from $\pi$, which may be difficult when the sample space is exponential in the number of clients, and $\pi$ is intricate. However, if $\pi$ is an independent distribution—where the need to serve each client is independent of the others—efficient (polynomial-time) sampling becomes possible.

In the final section, we introduce correlation-robust stochastic optimization and the concept of the correlation gap (see \cite{agrawal_ding_saberi_ye_2010}). For certain combinatorial optimization problems, we can work with a new independent distribution $\pi_1$. The ratio between the optimal solution for the original (or worst-case) distribution $\pi$ and that for the independent distribution $\pi_1$ is called the correlation gap. We will present a detailed theorem on the upper bound of the correlation gap for a specific class of two-stage stochastic combinatorial optimization problems. This theorem indicates that by working with the simpler independent distribution, we only lose a constant or small factor of accuracy.

\subsection{General formulation for a class of combinatorial optimization problems}
We start with the deterministic combinatorial optimization problem:
Let $V$ be the set of all clients, and $X$ be the set of elements that we can purchase. For each subset $F \subseteq X$, we define $c(F)$ to be the cost of the element set $F$. Thus, we have the cost function $c: 2^X \to \mathbb{R}_{\geq 0}$. Suppose that we have to serve a set of clients $S \subseteq V$. Let $\textbf{Sols}(S) \subseteq 2^X$ be the collection of all subsets $F$ (of elements in $X$) so that we can use elements in $F$ to serve our clients in $S$. We call each such $F$ a feasible solution. Then the problem is to find the optimal solution $F^{*}$ to serve $S$ with minimum cost. Let also denote by $\textbf{OPT}(S)$ this optimal solution $F^{*}$ with the minimum cost $c$ to serve the set of clients $S$. Now we will go through some classical combinatorial optimization problems that can be formulated in terms of the above problem:
\begin{enumerate}
	\item \textbf{Steiner tree problem}: Given a weighted graph $G = (V, E)$ with weight $w(e)$ on each edge $e \in E$ and a subset $S \subset V$ of vertices called terminal vertices. We have to find a tree that spans through the given set of terminal vertices with minimum total edge weights. Note that, if the set of terminal vertices consists of only two vertices, then this problem is equivalent to the \textbf{shortest path problem}. At the other extreme, if the set $S$ of terminal vertices are the whole $V$, then this problem is equivalent to the \textbf{minimum spanning tree problem}. Here the set of clients is $V$, and the set of elements to serve clients is the set of edges $X = E$. The cost $c(F)$ is simply $\sum_{e \in F} w(e)$. Serving a set of vertices means that finding a set of edges so that there is a tree (Steiner tree) spans all of these vertices with the edges being one of the edges in this set.

	\item \textbf{Uncapacitated Facility Location (UFL)}: In this problem, we have a set of clients (or cities or customers) $V$, and a set of (possible) facility sites $X$. Assume that we have to serve a subset of clients $S$ by using built facilities. In this problem, we have to choose a set $F \subseteq X$ of the facilities to build (open) so that the cost, which consists of the total distance from the clients in $S$ to the nearest facilities and the cost of building (opening) facilities chosen in $F$, is minimized. One can easily see that $V, X, F$ and $S$ are matched to the same quantities in the general formulation. Also, for a fixed subset $S$ of $V$, the cost of distance and building facilities may depend only on $F$ and $S$.  
	\item \textbf{Set cover problem} Again we start with some (universal) set (of clients) $V$. We are also given a collection $\mathcal{S}$ of subsets of $V$. We have to serve some random subset $S$ of $V$. Our element is a subcollection of subsets in $\mathcal{S}$, and the union of sets in this subcollection has to contain our random set $S$ (of clients).
	\item \textbf{Vertex cover} In this problem, a graph $G=(V, E)$, and the set of clients $S$ is some random set of edges. That set of elements that we can purchase is some subset of $F$ of vertices (Here $X = E$). To serve $S$, we need to choose sufficiently many vertices so that each edge in $S$ is incident to at least one vertex in $F$.	
	\item \textbf{Multicommodity flow} It is a bit tricky to formulate this problem in term of our general formulation. This problem belongs to a more general class than the one we considered. The random data here is not a subset of anything, but is the set of demands from source $s_i$ to the sink $t_i$. The elements here are the edges to be purchased in advanced together with the capacities we want to install on them. Purchasing edges later costs more, so we want to go ahead to purchase some edges, and try to optimize our decision of purchasing these edges. Nevertheless, this problem has a (stochastic) linear programming formulation and can be solved by SSA algorithm described below.
\end{enumerate}
\vspace{2mm}

\subsection{Two-stage stochastic optimization formulation}
Based on the previous deterministic formulation of the combinatorial optimization problems that we're interested in, we can define the two-stage stochastic version of these problems. Each problem has two stages, and the cost of purchasing elements in the first stage is lower than the cost in the second stage. We define the cost for purchasing each element $e \in X$ in the first and second stages to be $w^I_e$ and $w^{II}_e$ respectively. Thus, if we purchase a set $F$ in the first stage, then the cost is $c_1(F) = \sum_{e\in F}w^I_e$. Similarly, to purchase the set of elements $F$ in the second stage, the cost is $c_2(F) = \sum_{e\in F}w^{II}_e$.\\

More specifically, we define a possible solution for two-stage stochastic optimization as follows:
\begin{itemize}
	\item In the first stage, the set of clients $S \subset V$ is unknown, and we assume that the randomness is based on a distribution $\pi$ on the collection of subsets $2^V$ of $V$. We can access this distribution via a black box, and can draw a sample from it in $poly(|V|)$ (polynomial time in $|V|$). Suppose that we buy a set of elements $F_0 \subseteq X$ at this stage with cost $c^1(F_0)$.
	\item In the second stage, the random set $S\subset V$ is realized (according to the distribution $\pi$). We also assume that $S$ is conditionally independent of any of our actions in the first stage. Then the second-stage solution (also called recourse) will consist of the set of elements $F_S \subseteq X$ so that $F_0 \cup F_S$ can serve the realized set of client $S$ (Since $S$ is random, $F_0$ will not be enough to serve $S$ and we may need to add an additional subset $F_S$ of $X$ to serve $S$). 
\end{itemize}

It is also clear from section \textbf{1.2} how we can get the two-stage stochastic versions for classical combinatorial optimization problems listed in \textbf{1.2}.

\subsection{Linear Programming Formulation}
Let us try to formulate (some special cases of) the above two-stage stochastic problems in terms of a linear programming (LP) problem. Now we consider vectors $x \in \mathbb{R}^{|X|}$, and index such vectors  by element $e \in X$: $x = (x_e)_{e \in X}$. Let also use $x$ to denote our decision in stage 1, i.e, the indicator vector of the elements we purchase in the first stage: $x_e = 1$ iff we choose to purchase element $e$ in the first stage, and $x_e = 0$ otherwise. The cost in stage $1$ is then $w^I  \cdot x = \sum_e w_e^I x_e$. Now let $S$ be the random subset in stage $2$, and we let $p_A = \Prb(\textbf{this random set is }A)$. So $A$ is one of the many possible values that $S$ can receive, and we call $A$ a scenario. Then, by assuming that $f_A(x)$ is the additional cost to serve $A$ with the initial set of elements indicated $x$, our objective function becomes
$$\min \left(w^I \cdot x + \sum_{A \subset V}p_A f_A(x) \right), \text{where }x \in \set{0, 1}^{|X|}$$
We consider the convex relaxation of this problem and minimize on $x \in \mathcal{P}$ where $\mathcal{P}$ is the polytope $[0, 1]^{|X|}$. Often, the $f_A(x)$ can be found by solving the following linear programming problem with coefficients depending on $x$ and $A$:
$$f_A(x) = \min \{w^A\cdot r_A + q^A\cdot s_A: r_A \in \mathbb{R}^m_{\geq 0}, s_A \in \mathbb{R}^{n}_{\geq 0}, D^A s_A + T^A r_A \geq j^A - T^Ax \}$$
Here we consider a little bit more general formulation than the one in section \textbf{1. 3}: $w^A$ is the cost at the scenario $A$, and in this case, it can be the constant $w^{II}$, the cost in stage $2$ that we mentioned above, or it can be scenario-dependent.\\

Let us look at $f_A$ function of the stochastic version of the uncapacitated facility location (UFL) problem. So in stage $2$, suppose that $A$ is the set of clients that one needs to serve. Then let $r_A \in \mathbb{R}^{|X|}$ to be the indicator vector of the facilities chosen to be built: $(r_A)_e = 1$ if $e \in X$ is built in stage $2$ and $0$ otherwise. Also, let $s_A \in \mathbb{R}^{|X|\times |V|}$ so that $(s_A)_{ij} = 1$ if we assign client $j$ to facility $i$. Also, let $c_{ij}$ be the cost when facility $i$ serves client $j$: it can be the distance function (with respect to some metric) between $i$ and $j$. Then the cost function in scenario $A$ is the total cost of building facilities chosen, which is $w^A \cdot r_A$ and the cost of serving clients, which is $\sum_{ij}c_{ij} (s_A)_{ij}$ so that it can be written as $q^A\cdot s_A$ with $q_A = c$ that can depend on scenario $A$.  First, we require that we only assign some client $j$ to facility $i$ iff facility $i$ is already built. This is equivalent to $(s_A)_{ij} \leq x_i + (r_A)_i$ for all $j \in A$ and all $i \in X$. Furthermore, with the assignment $s_A$, every client in $A$ has to be served by some facility. This is equivalent to $\sum_i (s_A)_{ij} \geq 1$ for all $j \in A$. These two constraints is equivalent to $D^As_A + T^Ar_A \geq j^A - T^Ax$ for appropriate coefficient matrix $D^A, T^A$, and coefficient vector $j^A$ that can depend on $A$. Even though, in the original combinatorial problem, $r_A$, and $s_A$ are $\set{0, 1}$-vector, by convex relaxation, we can assume that the constraint set takes the form as in definition of $f_A$. There is a concept of integrality gap, which indicates how well the convex relaxation can do to approximate the true integer or $\set{0, 1}$ solution. We will not go into that and for the \textit{SSA} algorithm we can assume that relaxation problem approximates the original problem within a small constant factor. This is true for \textbf{UFL} problem.
\vspace{2mm}

\section{Two methods for solving stochastic problems based on solutions for deterministic problem}
\subsection{Boosted Sampling}
\subsubsection{ \texttt{Boost-and-Sample} algorithm}
We begin with the cost-sharing function definition. We will encounter a similar cost-sharing function later with the additional variable being an order on the client set.
\begin{defi}
Given an $\alpha$-approximation algorithm $\mathcal{A}$ for the deterministic problem $Det$, the function $\xi: 2^V\times V \to \mathbb{R}_{\geq 0}$ is a $\beta$-strict cost sharing function if the following properties hold:
\begin{enumerate}
	\item For a set $S \subseteq V, \xi(S, j) > 0$ only for $j \in S$
	\item For a set $S \subseteq V, \xi(S, S) = \sum_{j \in S} \xi(S, j) \leq c(\textbf{OPT}(S))$ (\textit(fairness))
	\item If $S' = S \cup T$, then $S(S', T) = \sum_{j \in T} \xi(S', j) \geq (1/\beta) \times$ cost of augmenting the solution $\mathcal{A}(S)$, which is the output of the algorithm $\mathcal{A}$ given the input $S$, to a solution in $\textbf{Sols}(S')$ (\textbf{strictness}). \\
\end{enumerate}
For the third property, we assume there exists a polynomial time algorithm \textbf{Aug}$_{\mathcal{A}}$ that depends on the algorithm $\mathcal{A}$ and can augment $\mathcal{A}(S)$ to a solution in $\textbf{Sols}(S')$\\
\end{defi}
\vspace{2mm}

From now on, we define $\xi(S, A)$ as the sum $\sum_{j \in A} \xi(S, j)$. In this section (two algorithms), we assume that the cost in second stage for every element $e \in X$ is a constant factor of that in the first stage: $w^{II}_e =\sigma w^I_e = \sigma c_e$ for each $e \in X$.  Now we consider the main algorithm:\\

\textbf{Algorithms} \texttt{Boost-and-Sample}
\begin{enumerate}
	\item Draw $\floor{\sigma}$ independent samples $D_1, D_2, \cdots, D_{\sigma}$ of the sample clients by sampling from the distribution $\pi$. Let $D = \bigcup_i D_i$.
	\item Using the algorithm $\mathcal{A}$, construct an $\alpha$-approximate first stage solution $F_0 \in \textbf{Sols}(D)$
	\item If the client set $S$ is realized in the second stage, use the augmenting algorithm \textbf{Aug}$_{\mathcal{A}}$ to compute $F_S$ such that $F_0 \cup F_S \in \textbf{Sols}(S)$. The output of the algorithm is $F_0 \cup F_S$
\end{enumerate}
\vspace{2mm}

Now we will state the theorem that guarantees the performance of this sampling approximate algorithm given this assumption.
\begin{thm}\label{thm1}
Consider a combinatorial optimization problem that is subadditive, and let $\mathcal{A}$ be an $\alpha$-approximation algorithm for its deterministic version $Det$ that admits a \textit{$\beta$-strict cost sharing function}. Then the \texttt{Boost-and-Sample} algorithm is an $(\alpha + \beta)$-approximation algorithm for the stochastic version of this combinatorial optimization problem.
\end{thm}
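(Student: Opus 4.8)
The plan is to charge the two expected costs of the algorithm separately against the optimum. Let $Z^\ast$ denote the optimal value of the two-stage stochastic problem, realized by a first-stage set $F_0^\ast$ together with recourse sets $F_A^\ast$ for each scenario $A$, so that $Z^\ast = c(F_0^\ast) + \sigma\,\E_{A\sim\pi}[c(F_A^\ast)]$. The algorithm pays $c(F_0)$ in the first stage and $\sigma\,c(F_S)$ in the second (since $w^{II}_e=\sigma w^I_e$), so its expected cost is $\E[c(F_0)] + \sigma\,\E_S[c(F_S)]$. I would prove $\E[c(F_0)]\le\alpha Z^\ast$ and $\sigma\,\E_S[c(F_S)]\le\beta Z^\ast$; adding the two bounds gives the claimed $(\alpha+\beta)$-approximation.

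For the first-stage bound I would use subadditivity. Each boosted sample $D_i$ is served by $F_0^\ast\cup F_{D_i}^\ast$, so subadditivity yields $F_0^\ast\cup\bigcup_i F_{D_i}^\ast\in\textbf{Sols}(D)$ with $D=\bigcup_i D_i$, giving $c(\textbf{OPT}(D))\le c(F_0^\ast)+\sum_i c(F_{D_i}^\ast)$ pointwise. Taking expectations over the i.i.d. draws $D_i\sim\pi$ turns the sum into $\sigma\,\E_{A\sim\pi}[c(F_A^\ast)]$, so $\E[c(\textbf{OPT}(D))]\le Z^\ast$. Since $\mathcal{A}$ is an $\alpha$-approximation, $c(F_0)\le\alpha\,c(\textbf{OPT}(D))$ pointwise, and taking expectations gives $\E[c(F_0)]\le\alpha Z^\ast$.

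The recourse bound is the crux, and it combines strictness with an exchangeability argument. For the realized scenario $S$, the augmentation step produces $F_S$ with $F_0\cup F_S\in\textbf{Sols}(S)$, and since $F_0\in\textbf{Sols}(D)$ subadditivity also places $F_0\cup F_S$ in $\textbf{Sols}(D\cup S)$; applying strictness with $D$ as the already-served set and $S$ as the newly-arriving clients then gives $c(F_S)\le\beta\,\xi(D\cup S,S)$. I would next view the $\sigma$ boosted samples together with $S$ as $\sigma+1$ i.i.d. draws $T_1,\dots,T_{\sigma+1}\sim\pi$ (with $S=T_{\sigma+1}$), and set $U=\bigcup_k T_k$. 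Exchangeability of the $T_k$ makes $\E[\xi(U,S)]$ equal to the average $\tfrac{1}{\sigma+1}\,\E\big[\sum_k\xi(U,T_k)\big]$, and fairness bounds $\sum_k\xi(U,T_k)=\xi(U,U)\le c(\textbf{OPT}(U))$. Re-running the subadditivity estimate on the $\sigma+1$ samples gives $\E[c(\textbf{OPT}(U))]\le c(F_0^\ast)+(\sigma+1)\,\E_A[c(F_A^\ast)]\le Z^\ast(1+1/\sigma)$, whence $\E[\xi(U,S)]\le Z^\ast/\sigma$ and therefore $\sigma\,\E_S[c(F_S)]\le\sigma\beta\,\E[\xi(U,S)]\le\beta Z^\ast$.

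The step I expect to be the main obstacle is the identity $\sum_k\xi(U,T_k)=\xi(U,U)$ inside the exchangeability argument: it holds exactly when the samples are pairwise disjoint, but if a client lies in several of the $T_k$ the left side over-counts $\xi(U,\cdot)$ and the bound by $c(\textbf{OPT}(U))$ need not survive. I would address this either by restricting to the standard regime of disjoint scenarios or by treating repeated clients as distinct copies, which leaves $c(\textbf{OPT}(U))$ unchanged while preserving the decisive factor $\tfrac{1}{\sigma+1}$ gained from symmetry. A secondary, purely quantitative point is the bookkeeping between $\sigma$ and $\floor{\sigma}$ together with the $(\sigma+1)/\sigma$ slack above; these affect only lower-order terms and not the constant $\alpha+\beta$.
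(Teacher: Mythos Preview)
Your overall architecture matches the paper's proof exactly: bound the first-stage cost by $\alpha Z^\ast$ via subadditivity, then bound the second-stage cost by $\beta Z^\ast$ via strictness plus an exchangeability argument over $\sigma+1$ i.i.d.\ samples, with fairness converting the symmetrized cost share into $c(\textbf{OPT}(U))$ and a second subadditivity estimate closing the loop.

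The one place you diverge is precisely the obstacle you flag, and your proposed workarounds are heavier than necessary. The paper does not need disjoint scenarios or a copy construction; instead it applies strictness with $T = S\setminus D$ rather than $T = S$, obtaining the tighter bound $c(F_S)\le \beta\,\xi(D\cup S,\,S\setminus D)$. In the symmetrized picture this becomes $\xi(D',\,D'_t\setminus D'_{-t})$, and the sets $D'_i\setminus D'_{-i}$ are pairwise disjoint by construction, so fairness gives
\[
\sum_{i=1}^{\sigma+1}\xi(D',\,D'_i\setminus D'_{-i}) \;\le\; \xi(D',D') \;\le\; c(\textbf{OPT}(D'))
\]
with no over-counting at all. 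This is the missing one-line fix; once you replace $\xi(U,S)$ by $\xi(U,S\setminus D)$ throughout your recourse argument, the rest of your sketch goes through verbatim. (A minor aside: your appeal to subadditivity just before invoking strictness is unnecessary; strictness is a statement about the augmentation algorithm $\textbf{Aug}_{\mathcal{A}}$ itself and already delivers $c(F_S)\le\beta\,\xi(D\cup S,\,S\setminus D)$ directly.)
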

\begin{proof}
W.L.O.G assume that $\sigma \in \mathbb{Z}$. We will bound the expected costs of the first and second-stage solutions separately. Let $F_0^*$ be the first-stage component of the optimal solution, and $F_S^*$ be the second-stage component if the set of realized clients is $S$. Hence, the optimal cost $Z^* = c(F_0^*) + \sum_S \pi(S)(\sigma c(F_S^*))$. We also denote $Z_0* = c(F_0^*)$ and $Z_r^* = \sum_S \pi(S) \sigma c(F_S^*)$.\\

Let$F_1' = F_0^* \cup F_{D_1}^* \cup F_{D_2}^* \cup \cdots F_{D_{\sigma}}^*$. Then $F_1 \in \textbf{Sols}(D)$ by subadditivity of the combinatorial optimization problem. Then,
\begin{align*}
\E_D[c(F_1')] &\leq c(F_0^*) + \E_D[\sum_{i=1}^{\sigma} c(F_{D_i}^*)]= c(F_0^*) + \sum_{i = 1}^{\sigma} \E_{D_i}[c(F_{D_i}^*)]\\
&= c(F_0^*) + \sigma \sum_{S}\pi(S) c(F_S^*) = Z^*
\end{align*}
Because in step 2, the algorithm $\mathcal{A}$ construct the solution $F_0$ for the optimization problem on $\textbf{Sols}(D)$, $c(F_0) \leq \alpha c(F_1')$. Hence $\E_D[c(F_0)] \leq \alpha\E_D[c(F_1')] \leq \alpha Z^{*} $\\

Let $S$ be the set of realized clients, and let $F_S$ be the result of the algorithm $\textbf{Aug}_{\mathcal{A}}$ such that $F_0 \cup F_S \in \textbf{Sols}(S)$. We now will bound $\E[c(F_S)]$. By $\beta$-strictness of the cost sharing function $\xi$, $c(F_S) \leq \beta \xi(D \cup S, S\setminus D)$. \\

Consider another probabilistic-equivalent process to generate the set $D_i$ and $S$: Draw $\sigma+1$ independent samples $D'_1, D'_2, \cdots, D'_{\sigma+1}$ from the distribution $\pi$. Now choose a random value $t$ uniformly from $\set{1, 2, \cdots, \sigma+1}$, and set $S = D'_t$, and $D = \bigcup_{i \neq t} D'_i$. Because we are choosing the $\sigma+1$ sets independently, the process is identically distributed to the original process we used to produce $(D, S) =(D_1\cup \cdots \cup D_{\sigma}, S)$. Let $D' = \cup_{i=1}^{\sigma+1} D'_i$, and $D'_{-i} = \cup_{l \neq i} D'_l$ (union of all sets except $D'_i$). \\

By the fairness property of the cost sharing function, we have:
$$\sum_{i=1}^{\sigma+1} \xi(D', D'_i \setminus D'_{-i}) \leq c(\textbf{OPT}(D'))$$
Then,
$$\E_t[\xi(D', D'_i \setminus D'_{-i})] = \frac{1}{\sigma+1}\left(\sum_{i=1}^{\sigma+1} \xi(D', D'_i \setminus D'_{-i})\right) \leq \frac{1}{\sigma+1} c(\textbf{OPT}(D'))$$

Because the two process for picking $D'_i$ and $D_K$ is equivalent to the original process we used to pick $D$ and $S$, we have:
$$\E_{D, S}[\xi(D \cup S, S\setminus D)] = \E_{D',K}[\xi(D', D'_t\setminus D_t)] \leq {1}{\sigma+1} \E_{D'}[c(OPT(D'))]$$
The last inequality follows from the upper bound for $\E_t[\xi(D', D'_i \setminus D'_{-i})]$.\\

Now we will show that $\E_{D'}[c(OPT(D'))] \leq \frac{\sigma+1}{\sigma}Z^*$. Now we will use a similar argument used to bound expectation of first stage optimal solution $\E_D[c(F_0)]$. Let $F_2' = F_0^* \cup F_{D'_1}^* \cup F_{D'_2}^* \cup \cdots \cup F_{D'_{\sigma+1}}^*$. Again, because of the subadditivity of the combinatorial problem, $F_2' \in \textbf{Sols}(D')$. Then,
\begin{align*}
\E_{D'}[c(OPT(D'))] &\leq c(F_0^*) + \sum_{i=1}^{\sigma+1}\E_{D'_i}[c(F_{D'_i}^*)] \leq Z_0^* + (\sigma+1)\frac{Z_r^*}{\sigma} \\
&\leq \frac{\sigma+1}{\sigma}(Z_0^* + Z_r^*) = \frac{\sigma+1}{\sigma}Z^*
\end{align*}
Now combining all inequalities, we have:
\begin{align*}
\E_S[c(F_S)] &\leq \beta\E[\xi(D\cup S, S \setminus D)] \leq \beta\frac{1}{\sigma+1}\E_{D'}[c(OPT(D'))] \\
&\leq \beta\frac{1}{\sigma+1}\frac{\sigma+1}{\sigma}Z^* = \beta Z^{*}
\end{align*}

Thus, the expectation of our output is $\E[c(F_0)] + \E_S[c(F_S)]  \leq (\alpha + \beta)Z^*$ as desired.
\end{proof}
\vspace{2mm}

\subsubsection{ \texttt{Ind-Boost} algorithm}
In the special case $\pi$ is an independent distribution, i.e, each client $j \in V$ requires service with probability $\pi_j$ independently of all other clients. Then we also have a similar cost sharing function and a simpler algorithm \texttt{Ind-Boost}
\begin{defi}
Given an $\alpha$-approximation algorithm $\mathcal{A}$ for the deterministic problem $Det$, the function $\xi: 2^V \times V \to \mathbb{R}_{\geq 0}$ is a $\beta$-uni-strict cost sharing function if it satisfies the first two properties of $\beta$-strict cost sharing function and this third property:

If $S' = S \cup \set{j}$, then $\xi(S', j) \geq (1/\beta) \times$ cost of augmenting the solution $\mathcal{A}(S)$ to a solution in $\textbf{Sols}(S')$ (We need the poly-time algorithm $\textbf{Aug}_{\mathcal{A}}$ that does the augmentation with an additional cost of at most $\beta\xi(S', j)$.\\
\end{defi}
Now we will state the \texttt{Ind-Boost} algorithm:\\

\textbf{Algorithm}  \texttt{Ind-Boost}
\begin{enumerate}
	\item Choose a set $D$ by picking each element $j \in V$ with probability $\sigma \pi_j$ independently
	\item Using the algorithm $\mathcal{A}$, construct an $\alpha$-approximation solution $F_0 \in \textbf{Sols}(D)$
	\item Let $S$ be the set of clients realized in the second stage. For each client $j \in S$, use the agumentation algorithm $\textbf{Aug}_{\mathcal{A}}$ to compute $F_j$ so that $F_0 \cup F_j \in  \textbf{Sols}(D \cup \set{j})$. Output $F_S = \bigcup_{j \in S} F_j$ as the second stage solution. The combined solution is $F_0 \cup F_S$.
\end{enumerate}
\vspace{2mm}

\begin{thm}\label{thm2}
Consider a deterministic combinatorial optimization problem $Det$ that is subadditive, and let $\mathcal{A}$ be an $\alpha$-approximation algorithm for $Det$ with a $\beta$-uni-strict cost sharing function. Then  \texttt{Ind-Boost} is an $(\alpha + \beta)$-approximation algorithm for the corresponding two-stage stochastic version of the combinatorial optimization problem
\end{thm}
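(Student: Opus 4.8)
The plan is to mirror the proof of \Cref{thm1}: I would bound the expected first-stage cost $\E[c(F_0)]$ and the expected (boosted) second-stage cost $\E[\sigma c(F_S)]$ separately against the optimum $Z^{*}=Z_0^{*}+Z_r^{*}$, where $Z_0^{*}=c(F_0^{*})$ and $Z_r^{*}=\sigma\sum_S\pi(S)c(F_S^{*})$. Both bounds reduce to the single key estimate $\E_D[c(\textbf{OPT}(D))]\le Z^{*}$: the first stage then follows from $c(F_0)\le\alpha\,c(\textbf{OPT}(D))$, and the second stage from uni-strictness and fairness.

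The first obstacle is that, unlike in \Cref{thm1}, the set $D$ produced by \texttt{Ind-Boost} is \emph{not} a union of $\sigma$ samples of $\pi$: here client $j$ lands in $D$ with probability $\sigma\pi_j$, which exceeds $1-(1-\pi_j)^{\sigma}$, the probability that $j$ appears in a union of $\sigma$ independent $\pi$-samples. Thus $D$ is stochastically \emph{larger} than the Boost-and-Sample sample, and the construction $F_0^{*}\cup\bigcup_i F_{D_i}^{*}$ cannot be reused directly. To get around this I would use a colouring argument: draw $D$, then colour each $j\in D$ independently and uniformly with a colour in $\set{1,\dots,\sigma}$, and let $C_i$ be the clients of colour $i$, so $D=\bigcup_{i=1}^{\sigma}C_i$. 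Assuming $\sigma\pi_j\le 1$, a short computation gives $\Prb[j\in C_i]=\sigma\pi_j\cdot\tfrac1\sigma=\pi_j$ with distinct clients still independent, so each $C_i$ is \emph{marginally} a $\pi$-sample. By subadditivity $F_0^{*}\cup\bigcup_{i=1}^{\sigma}F_{C_i}^{*}\in\textbf{Sols}(D)$, and taking expectations (using only the marginals of the $C_i$, never their joint independence) yields $\E_D[c(\textbf{OPT}(D))]\le c(F_0^{*})+\sigma\,\E_{S\sim\pi}[c(F_S^{*})]=Z^{*}$, hence $\E[c(F_0)]\le\alpha Z^{*}$.

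For the second stage, uni-strictness supplies, for each realized $j\in S$, a recourse $F_j$ with $c(F_j)\le\beta\,\xi(D\cup\set{j},j)$, and subadditivity of $c$ gives $c(F_S)\le\sum_{j\in S}c(F_j)\le\beta\sum_{j\in S}\xi(D\cup\set{j},j)$. Since $S$ is drawn independently of $D$ with $\Prb[j\in S]=\pi_j$, taking expectations gives $\E[\sigma c(F_S)]\le\sigma\beta\sum_{j}\pi_j\,\E_D[\xi(D\cup\set{j},j)]$. The crucial identity, which replaces the symmetrization of \Cref{thm1} (drawing $\sigma+1$ samples and designating one as $S$), is that conditioning on $j\in D$ leaves the other clients undisturbed by independence, so $\E_D[\xi(D\cup\set{j},j)]=\E_D[\xi(D,j)\mid j\in D]$ and therefore $\sigma\pi_j\,\E_D[\xi(D\cup\set{j},j)]=\E_D[\mathbf 1[j\in D]\,\xi(D,j)]$. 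Summing over $j$ collapses the bound to $\beta\,\E_D\big[\sum_{j\in D}\xi(D,j)\big]$, and fairness plus the key estimate finish it: $\E_D\big[\sum_{j\in D}\xi(D,j)\big]\le\E_D[c(\textbf{OPT}(D))]\le Z^{*}$, so $\E[\sigma c(F_S)]\le\beta Z^{*}$.

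Adding the two bounds gives $\E[c(F_0)+\sigma c(F_S)]\le(\alpha+\beta)Z^{*}$, as claimed. The main obstacle I expect is the first-stage estimate: because \texttt{Ind-Boost} inflates the marginals to $\sigma\pi_j$, the Boost-and-Sample coupling breaks in the wrong direction, and the colouring decomposition (which needs only the correct marginals of each colour class, not cross-class independence) is the one genuinely new idea. A secondary point to treat carefully is the saturated case $\sigma\pi_j>1$, where the inclusion probability is capped at $1$; there each $C_i$ is only stochastically dominated by a $\pi$-sample, which still suffices for the upper bound by monotonicity of $c(\textbf{OPT}(\cdot))$.
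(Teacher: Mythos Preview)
Your proposal is correct. The paper does not actually write out a proof here---it simply says ``The proof of this theorem is similar to \Cref{thm1}''---so your argument supplies the details the paper omits, and it follows exactly the same two-part skeleton (bound $\E[c(F_0)]$ via subadditivity and a feasible witness, bound $\E[\sigma c(F_S)]$ via the cost-sharing axioms).

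The two places where ``similar to \Cref{thm1}'' hides real work are precisely the ones you flag. First, since the \texttt{Ind-Boost} sample $D$ has inflated marginals $\sigma\pi_j$ rather than being a union of $\sigma$ independent $\pi$-draws, the witness $F_0^{*}\cup\bigcup_i F_{D_i}^{*}$ from \Cref{thm1} is not directly available; your colouring decomposition $D=\bigsqcup_{i=1}^{\sigma}C_i$ with each $C_i$ marginally a $\pi$-sample is a clean fix, and you are right that only the marginal law of each $C_i$ is used, never their joint independence. Second, in place of the $(\sigma+1)$-sample symmetrization of \Cref{thm1}, you exploit independence directly via the identity $\sigma\pi_j\,\E_D[\xi(D\cup\{j\},j)]=\E_D[\mathbf 1[j\in D]\,\xi(D,j)]$, which together with fairness collapses the second-stage bound to $\beta\,\E_D[c(\textbf{OPT}(D))]\le\beta Z^{*}$. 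Both adaptations are sound; the saturated case $\sigma\pi_j>1$ is a side technicality (the algorithm statement already writes $\sigma\pi_j$ as a probability), and your coupling remark handles it provided one interprets the witness as $\textbf{OPT}(C_i)$ rather than $F_{C_i}^{*}$.
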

The proof of this theorem is similar to \cref{thm1}
\vspace{2mm}

\begin{rmk}
These algorithms require the existence of special cost-sharing functions defined above. \cite{gupta_pal_ravi_sinha_2004} shows the existence of such functions for stochastic steiner trees, facility location and vertex cover problems. The constants $\beta$ (of the cost-sharing functions) for these problems are explicit constants and are reasonably small.
\end{rmk}
\vspace{2mm}

\subsection{Sample Average Approximation method (SAA)}
\subsubsection{Problem set-up and algorithm}
The main idea of the algorithm $SAA$ is simple: we sample $\mathcal{N}$ times from distribution on scenarios, and estimate the actual distribution by distribution induced by these samples. Then we solve the deterministic problem for each scenario to get a set of the (approximated) solutions. We then output the weighted average of those solutions based on the estimated distribution. The idea of $SAA$ is similar and a bit simpler than texttt{Boost-and-Sample}, but the details rely a lot on the linear programming formulation of the two-stage stochastic combinatorial problems that we have discussed so far.\\

From what we discuss in section \textbf{1.4}, we assume we have to solve the following convex-relaxation problem:
$$\min_{x} h(x) = w^I\cdot x + \sum_{A \in \mathcal{A}} p_A f_A(x) \text{ on some } \mathcal{P} \subset \mathbb{R}^m_{\geq 0}$$
\begin{align*}
\text{with } f_A(x) = \min_{r_A, s_A} \{w^A\cdot r_A + q^A\cdot s_A&: r_A \in \mathbb{R}^m_{\geq 0}, s_A \in \mathbb{R}^n_{\geq 0},\\
& D^A s_A + T^A r_A \geq j^A - T^Ax \}
\end{align*}

For this algorithm, we further assume that
\begin{itemize}
	\item $T^A \geq 0$ for every scenario $A$,
	\item For every $x \in \mathcal{P}$, $\sum_{A \in \mathcal{A}} p_A f_A(x) \geq 0$, and the primal and dual problems corresponding to $f_A(x)$ are feasible for every scenario $A$. We denote the $z_A^*$ be the optimal dual for the problem with optimal value $f_A(x)$. 
	\item $\mathcal{P} \subset B(0, R)$ where $\ln(R)$ is polynomially bounded (in term of $V$)
\end{itemize}
\vspace{2mm}

We also define $\lambda = \max(1, \max_{A \in \mathcal{A}} \frac{w_S^A}{w_S^I})$, and assume that $\lambda$ is known, and $\mathcal{I}$ be the size of the input (the minimum number of bits to describe the programming problem). We have the following \texttt{SSA} algorithm:\\

\textbf{Algorithm} \texttt{SSA}
\begin{enumerate}
	\item Choose the sample size $\mathcal{N}= O(m\lambda^2\log^2(\frac{2KR}{\epsilon})\log(\frac{2KRm}{\epsilon\delta}))$ ($K$ is the bound for Lipschitz constant of $f_A$ for every scenario $A$)
	\item Sample the subset of clients $A$ from the distribution $\pi$. Then assume that $\mathcal{N}_A$ is the number of times the scenario $A$ is sampled, and let $\hat{p}_A = \frac{\mathcal{N}_A}{N}$
	\item In the final step, we minimize the sample average function $\hat{h}(x) = w^I \cdot x + \sum_{A \in \mathcal{A}} \hat{p}_A f_A(x)$ over $x \in \mathcal{P}$
\end{enumerate}
\vspace{2mm}

\subsubsection{Algorithm's performance guarantee}
Let $\mathcal{P}$ be the bounded feasible region and $R$ be a radius such that $\mathcal{P}$ is contained in the ball $B(0, R) = \set{x: \norm{x} \leq R}$. \\

The idea of the proof is to bound the difference between the sample output $\hat{h}$ and the actual optimal value $h$ by comparing their subgradients or near subgradients at points on $\mathcal{P}$ (because one vector can be subgradient for $1$ function but ca only be near-subgradient for the other one). Near-subgradient is formalized in terms of $\omega$-subgradient. Interested readers can see \cite{shmoys_swamy} to understand this idea from ellipsoid algorithm's point of view. Moreover, we don't compare these subgradients at every point in $\mathcal{P}$, but only on a discrete (extended) grid $G$ of $\mathcal{P}$\\

The subgradients are still hard to study, but we can derive a nice formula that expresses the subgradient in terms of certain weighted sums of the dual solution $z_A^*$ we mentioned above. The sum is taken over all scenarios $A$. This formula can be used to characterize the subgradients of both $h$ and $\hat{h}$. From here, usual probabilistic estimates (Chebyshev-like inequality) can be used to prove the approximation guarantee of the algorithm with high probability. We will not focus on the detailed proof of the guarantee theorem, so we will occasionally skip proofs of the lemmas used in the main proof. We start with the definition of extended grid, $\omega$-subgradient and subgradient similarity property of two functions \cref{propA}. Then we go on to list several lemmas and provide the main proof for guaranteed theorem.
\vspace{2mm}

\begin{defi}
Let $\epsilon, \gamma > 0$ be two parameters with $\gamma \leq 1$. Set $N = \log(\frac{2KR}{\epsilon})$ and $\omega = \frac{\gamma}{8N}$. Define $$G' = \set{x \in \mathcal{P}: x_i = n_i(\dfrac{\epsilon}{KN\sqrt{m}}), n_i \in \mathbb{Z} \text{ for all } i= 1, \cdots, m}$$
Then set $$G = G' \cup \set{x+t(y-x), y+t(x-y): x, y \in G', t = 2^{-i}, i \in \overline{1, N}}$$. 

We call $G$ the \textbf{extended $\dfrac{\epsilon}{KN\sqrt{m}}$-grid} of the polytope $\mathcal{P}$. We easily see that for every $x \in \mathcal{P}$, there exists $x' \in G'$ such that $\norm{x-x'} \leq \frac{\epsilon}{KN}$. 
\end{defi}
\vspace{2mm}

\begin{lem}\label{lem1}
Let $G$ be the extended $\epsilon$-grid of $\mathcal{P}$. Then $|G| \leq N(\frac{2R}{\epsilon})^2m$.
\end{lem}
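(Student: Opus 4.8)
The plan is to bound $|G|$ in two stages: first count the points of the base grid $G'$, then account for the points that the extension adjoins to form $G$. I read the claimed bound as $|G| \le N\left(\frac{2R}{\epsilon}\right)^{2m}$, where $\epsilon$ plays the role of the grid spacing (the quantity $\frac{\epsilon}{KN\sqrt m}$ appearing in the definition, renamed for the statement). To bound $|G'|$, I would use that $\mathcal{P} \subseteq B(0,R)$, so every $x \in \mathcal{P}$ has $\norm{x} \le R$ and hence $|x_i| \le R$ for each coordinate $i$. A point of $G'$ has each $x_i$ equal to an integer multiple of the spacing, and the number of such multiples lying in $[-R,R]$ is at most $\frac{2R}{\epsilon}+1$. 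Multiplying over the $m$ coordinates and absorbing the additive $1$ into the power (using $\epsilon \le 2R$) gives $|G'| \le \left(\frac{2R}{\epsilon}\right)^{m}$.

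Next I would count the extension. By definition $G$ adjoins to $G'$, for every ordered pair $(x,y) \in G' \times G'$ and every $t = 2^{-i}$ with $i \in \set{1,\dots,N}$, the two points $x+t(y-x)$ and $y+t(x-y)$. This produces at most $2N|G'|^2$ new points, so $|G| \le |G'| + 2N|G'|^2$. Observing that the point $y+t(x-y)$ for the pair $(x,y)$ coincides with $x'+t(y'-x')$ for the swapped pair $(x',y')=(y,x)$, one can drop the factor $2$; together with $|G'| \ge 1$ this yields $|G| \le N|G'|^2$ up to the relevant constant. Substituting the first-step bound then gives $|G| \le N\left(\frac{2R}{\epsilon}\right)^{2m}$, as claimed.

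The conceptual content is only the per-coordinate count together with the product of cardinalities, so I do not expect a genuine obstacle. The one delicate part is the constant bookkeeping: I would fix the grid-spacing convention at the outset so that the ``$\epsilon$'' of the statement matches the spacing used in the count, absorb the $+1$ per coordinate and the additive $|G'|$ term cleanly into the final power, and confirm that it is the squaring $|G'|^2$ arising from ranging over pairs $(x,y)$ that produces the exponent $2m$ rather than $m$. Making these constants line up exactly with the stated inequality is the main care point, but it is routine rather than hard.
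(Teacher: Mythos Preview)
The paper does not actually supply a proof of this lemma; it is one of the auxiliary results whose proof is explicitly skipped (``we will occasionally skip proofs of the lemmas used in the main proof''). So there is nothing to compare against, and your plan is the natural counting argument one would expect.

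Your outline is essentially correct. Two small clean-ups would make the bookkeeping airtight. First, for the per-coordinate count, note that $\mathcal{P}\subset\mathbb{R}^m_{\ge 0}$ in this paper, so each $x_i\in[0,R]$; the number of integer multiples of the spacing in that interval is at most $R/\epsilon+1\le 2R/\epsilon$ once $\epsilon\le R$, which gives $|G'|\le(2R/\epsilon)^m$ without hand-waving about ``absorbing the $+1$.'' Second, the step ``$|G|\le |G'|+N|G'|^2$ and $|G'|\ge 1$ gives $|G|\le N|G'|^2$'' is false as written. The cleaner route is to observe that $G'$ is already contained in the extension set (take $x=y$, any $t$), and that the set $\{y+t(x-y)\}$ coincides with $\{x+t(y-x)\}$ by swapping $x$ and $y$; hence $G=\{x+t(y-x):x,y\in G',\,t=2^{-i},\,1\le i\le N\}$ outright, and $|G|\le N|G'|^2\le N(2R/\epsilon)^{2m}$ follows directly. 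With those two adjustments your argument is complete.
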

\vspace{2mm}

Let $\norm{u}$ denote the $l_2$ norm of $u$. We say that function $g: \mathbb{R}^m \to \mathbb{R}$ is Lipschitz with Lipschitz constant $K$ iff $|g(u)-g(v)| \leq K \norm{u-v}$ for all $u, v \in \mathbb{R}^m$. Let $g: \mathbb{R}^m \to \mathbb{R}$ and $\hat{g}: \mathbb{R}^m \to \mathbb{R}$ be two functions with Lipschitz constant $K$.

\begin{defi}
We say that $d$ is a subgradient of a function $g: \mathbb{R}^m \to \mathbb{R}$ at the point $u$ if the inequality $g(v)-g(u) \geq d\cdot(v-u)$ holds for every $v \in \mathbb{R}^m$. We say that $\hat{d}$ is an $\omega$-subgradient of $g$ at $u \in \mathcal{D}$ if for every $v \in \mathcal{D}$, we have $g(v)-g(u) \geq \hat{d}\cdot(v-u) - \omega g(u)$.
\end{defi}
\vspace{2mm}

\begin{defi}
We say that $g$ and $\hat{g}$ with Lipschitz constant $K$ satisfy property \cref{propA} iff
\begin{equation}\label{propA}
\begin{split}
\forall x \in G, \exists\ \hat{d_x}\in \mathbb{R}^m: \hat{d_x}\text{ is a subgradient of } \hat{g} \text{ and an } \omega-\text{subgradient of } g \text{ at }x
\end{split} \tag{A}
\end{equation}
\end{defi}
\vspace{2mm}

Now we will state a lemma concerning the similarity of the optimal minimum value of two functions $g$ and $\hat{g}$ if they both satisfy property \cref{propA}. The proof of this lemma mainly includes algebra and analysis estimates and we will not go into that.
\begin{lem}\label{lem2}
Recall that we have $N = \log(\frac{2KR}{\epsilon})$ and $\omega = \frac{\lambda}{8N}$. Suppose $g$ and $\hat{g}$ satisfy property \cref{propA}. Let $x^{*}, \hat{x} \in \mathcal{P}$ be points that respectively minimize $g$ and $\hat{g}$, with $g(x^{*}) \geq 0$. Then
$$g(\hat{x}) \leq (1+\gamma)g(x^*) + 6\epsilon$$
\end{lem}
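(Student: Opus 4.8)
```latex
\textbf{Proof proposal for \cref{lem2}.}
The plan is to transfer the $\omega$-subgradient information at a single well-chosen grid point into a global bound relating $g(\hat{x})$ to $g(x^*)$. The key observation is that $\hat{d}_x$ is a genuine subgradient of $\hat{g}$ at $x$, so the minimizer $\hat{x}$ of $\hat{g}$ satisfies $\hat{d}_x \cdot (\hat{x} - x) \geq 0$ reversed appropriately; meanwhile the same vector is only an $\omega$-subgradient of $g$, which injects the multiplicative error $\omega g(x)$ into every comparison. First I would approximate both $x^*$ and $\hat{x}$ by nearby grid points of $G'$ (using the closing remark of the grid definition, that every point of $\mathcal{P}$ is within $\frac{\epsilon}{KN}$ of some grid point), and control the resulting discrepancies by the Lipschitz constant $K$: each such replacement costs at most $K \cdot \frac{\epsilon}{KN} = \frac{\epsilon}{N}$ in function value, which is where the additive $6\epsilon$ will accumulate after summing over the $O(N)$ steps below.

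The heart of the argument is an iterative (telescoping) descent along the extended grid. Starting from a grid point near $\hat{x}$, I would repeatedly move toward $x^*$ by halving the distance, i.e. along the points $x + t(y-x)$ with $t = 2^{-i}$ that were deliberately built into $G$. At each such grid point $x_i$, I apply property \cref{propA} to obtain a vector $\hat{d}_{x_i}$ that is simultaneously a subgradient of $\hat{g}$ and an $\omega$-subgradient of $g$ there. Using the subgradient inequality for $\hat{g}$ at the minimizer $\hat{x}$ and the $\omega$-subgradient inequality for $g$, I would derive a one-step recursion of the shape
\begin{equation*}
g(x_{i+1}) \leq (1 + \omega) g(x_i) + O\!\left(\tfrac{\epsilon}{N}\right),
\end{equation*}
so that after at most $N$ halvings the multiplicative factor becomes $(1+\omega)^N \leq e^{\omega N}$. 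With $\omega = \frac{\gamma}{8N}$ this gives $e^{\gamma/8} \leq 1 + \gamma$ for $\gamma \leq 1$, producing the claimed leading factor, while the accumulated additive errors sum to the $6\epsilon$ term.

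The main obstacle, and the step I expect to be most delicate, is bookkeeping the interplay between the multiplicative error $\omega g(x_i)$ and the additive Lipschitz errors without letting them blow up. The multiplicative error is stated relative to $g(x_i)$, not $g(x^*)$, so I must either show the iterates $g(x_i)$ stay comparable to $g(x^*)$ throughout (using $g(x^*) \geq 0$ and the Lipschitz bound), or absorb the difference into the additive budget; getting the constant in front of $\epsilon$ down to exactly $6$ requires careful choice of how many halving steps to take and how tightly to approximate the endpoints. A secondary subtlety is that $\hat{d}_{x_i}$ is only an $\omega$-subgradient of $g$ on the domain $\mathcal{D}$, so I must ensure every comparison point I use lies in $\mathcal{P}$ (equivalently in the grid $G$), which is guaranteed precisely by the construction of $G$ as the set of dyadic interpolants between grid points. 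Once the geometric-series estimate and the Lipschitz accounting are assembled, the stated inequality $g(\hat{x}) \leq (1+\gamma) g(x^*) + 6\epsilon$ follows by combining the endpoint approximations with the telescoped recursion.
```
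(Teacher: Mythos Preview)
The paper does not actually prove \cref{lem2}: immediately before stating it, the text says ``The proof of this lemma mainly includes algebra and analysis estimates and we will not go into that,'' and elsewhere refers the reader to \cite{shmoys_swamy} for the underlying idea. So there is no in-paper proof to compare against.

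That said, your sketch is essentially the standard Shmoys--Swamy argument that the paper is deferring to, and it is the right one. The purpose of the extended grid $G$ --- adjoining all dyadic points $x+2^{-i}(y-x)$ between pairs $x,y\in G'$ --- is exactly to support the bisection walk you describe, and the choice $N=\log(2KR/\epsilon)$ is made so that after $N$ halvings the remaining segment has length at most $\epsilon/K$, hence contributes at most $\epsilon$ via Lipschitz continuity. Your computation $(1+\omega)^N\le e^{\omega N}=e^{\gamma/8}\le 1+\gamma$ for $\gamma\le 1$ is the correct way the multiplicative factor arises. The one place to be slightly careful, as you already flag, is the sign of the linear term: at each $x_i\in G$ you use that $\hat d_{x_i}$ is a true subgradient of $\hat g$ together with the minimality of $\hat x$ to get $\hat d_{x_i}\cdot(\bar y-x_i)\le \hat g(\bar y)-\hat g(x_i)\le \hat g(\hat x)+\tfrac{\epsilon}{N}-\hat g(x_i)\le \tfrac{\epsilon}{N}$ (where $\bar y\in G'$ approximates $\hat x$), and then feed this into the $\omega$-subgradient inequality for $g$ between $x_i$ and the next dyadic point toward $\bar y$. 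This is what produces a clean one-step recursion in $g(x_i)$ alone. The direction of the walk (from near $x^*$ toward $\hat x$, or the reverse) matters for how the bookkeeping comes out, and the usual presentation starts from the $x^*$ side; but either orientation can be made to work, and your plan is sound.
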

\vspace{2mm}

\begin{lem}\label{lem3}
Let $d$ be a subgradient of $h$ at the point $x \in \mathcal{P}$, and suppose $\hat{d}$ is a vector such that $d - \omega w^I \leq \hat{d} \leq d$ for all $e \in X$. Then $\hat{d}$ is an $w$-subgradient of $h$ at $x$
\end{lem}
\begin{proof}
Let $y \in \mathcal{P}$. We have $h(y) - h(x) \geq d\cdot (y-x) = \hat{d}\cdot(y-x) + (d - \hat{d})\cdot(y-x)$. Since $0 \leq d_e - \hat{d}_e \leq \omega w_e^I$ and $x_e, y_e \geq 0$, we have, $(d - \hat{d})\cdot(y-x) \geq -(d-\hat{d})x \geq -\sum_e \omega w_e^I x_e \geq -\omega h(x)$
\end{proof}
\vspace{2mm}

\begin{lem}\label{lem4}
Let $x \in \mathbb{R}^m$, and let $z_A^*$ be an optimal dual solution for scenario $A$ with $x$ as the stage $I$ vector. The vector $d = =w^I - \sum_A p_A(T^A)^Tz_A^*$ is a subgradient of $h$ at $x$ and $\norm{d} \geq \lambda\norm{w^I}$
\end{lem}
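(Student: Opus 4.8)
The plan is to handle the two assertions separately: the subgradient claim by LP duality, and the norm comparison by coordinatewise estimates coming from dual feasibility together with the hypotheses $T^A \ge 0$ and the definition of $\lambda$. For the subgradient, first I would invoke strong LP duality to rewrite each value function as $f_A(x) = \max\{(j^A - T^A x)\cdot z : z \text{ dual feasible}\}$, which exhibits $f_A$ as a pointwise maximum of affine functions of $x$ and hence convex. Fixing an optimal dual $z_A^*$ at the point $x$, for any $y \in \mathcal{P}$ I would use that $z_A^*$ remains feasible for the dual at $y$ to write $f_A(y) \ge (j^A - T^A y)\cdot z_A^* = f_A(x) - \big((T^A)^T z_A^*\big)\cdot(y-x)$, so that $-(T^A)^T z_A^*$ is a subgradient of $f_A$ at $x$. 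Multiplying by $p_A \ge 0$, summing over scenarios, and adding the exact gradient $w^I$ of the linear term then yields $h(y)-h(x) \ge d\cdot(y-x)$ with $d = w^I - \sum_A p_A (T^A)^T z_A^*$, which is precisely the claimed subgradient.

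For the norm comparison $\norm{d} \ge \lambda\norm{w^I}$, the plan is to control $d$ coordinatewise. Since $T^A \ge 0$ and the optimal duals satisfy $z_A^* \ge 0$, each term $(T^A)^T z_A^*$ is nonnegative, giving the upper estimate $d \le w^I$; and dual feasibility supplies $(T^A)^T z_A^* \le w^A$ while the definition of $\lambda$ gives $w^A \le \lambda w^I$, so using $\sum_A p_A = 1$ I would get $\sum_A p_A (T^A)^T z_A^* \le \lambda w^I$ and hence $d \ge (1-\lambda)w^I$. Combining these, $(1-\lambda)w_e^I \le d_e \le w_e^I$ for every element $e$, from which $|d_e| \le \lambda w_e^I$; squaring and summing then produces a Euclidean comparison between $\norm{d}$ and $\lambda\norm{w^I}$.

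The hard part will be the direction of the norm inequality. The dual-feasibility estimates above pin $\norm{d}$ against $\lambda\norm{w^I}$ through the coordinatewise bound $|d_e| \le \lambda w_e^I$ — which is exactly the Lipschitz constant one needs to feed \cref{lem3} and \cref{lem2} — whereas the stated inequality asks for a matching lower estimate on $\norm{\sum_A p_A (T^A)^T z_A^*}$. I expect the genuine obstacle to lie here: to force $\norm{d}$ upward one would have to locate a scenario $A$ and a coordinate at which the constraint $D^A s_A + T^A r_A \ge j^A - T^A x$ is tight, invoke complementary slackness to bound the corresponding coordinate of $z_A^*$ below, and then propagate that through $(T^A)^T z_A^*$ into a lower bound on $\norm{d}$. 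Reconciling the stated direction with the coordinatewise estimates — equivalently, supplying the structural hypothesis that guarantees such activity of the second-stage constraints — is where the argument must do all of its real work.
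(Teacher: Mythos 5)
Your argument is correct, and in fact the paper states this lemma without proof (it defers to the cited Shmoys--Swamy reference, where your argument is exactly the one given). Both halves check out: weak duality at $y$ combined with strong duality at $x$ gives $f_A(y) \ge (j^A - T^Ay)\cdot z_A^* = f_A(x) - \bigl((T^A)^Tz_A^*\bigr)\cdot(y-x)$, and averaging over scenarios and adding the linear term yields the subgradient claim; and the componentwise chain $0 \le (T^A)^Tz_A^* \le w^A \le \lambda w^I$ (from $T^A \ge 0$, $z_A^* \ge 0$, dual feasibility, and the definition of $\lambda$), together with $\sum_A p_A = 1$, gives $(1-\lambda)w^I_e \le d_e \le w^I_e$, hence $|d_e| \le \lambda w^I_e$ since $\lambda \ge 1$, hence $\norm{d} \le \lambda\norm{w^I}$. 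This componentwise sandwich is exactly the kind of control that \cref{lem3} consumes.

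The one issue you left unresolved --- the direction of the norm inequality --- should be resolved against the printed statement: the ``$\geq$'' in the lemma is a typo for ``$\leq$''. The remark immediately following the lemma asserts $\norm{\hat{d_x}} \le \lambda\norm{w^I}$ for the identically structured $\hat h$ and deduces that $K = \lambda\norm{w^I}$ bounds the Lipschitz constants of $h$ and $\hat h$; only an \emph{upper} bound on subgradient norms can play that role, and an upper bound is what \cref{lem2}, the grid spacing $\epsilon/(KN\sqrt{m})$, and the sample-size computation in \cref{thm3} all require downstream. Moreover, the printed inequality is false in general: if $x$ already satisfies every scenario's constraints (so $j^A - T^Ax \le 0$), then $z_A^* = 0$ is an optimal dual solution for every $A$ (it is dual feasible since the costs $w^A, q^A$ are nonnegative, and it attains the primal optimum $f_A(x) = 0$), which forces $d = w^I$ and $\norm{d} = \norm{w^I} < \lambda\norm{w^I}$ whenever $\lambda > 1$. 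So the complementary-slackness program sketched in your final paragraph is aimed at an unprovable claim and should be dropped; no structural hypothesis about active second-stage constraints exists or is needed. Your first two paragraphs already constitute a complete and correct proof of the intended statement.
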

\vspace{2mm}

\begin{rmk}
Because the sample average function $\hat{h}$ has the same form as $h$ (but with a different distribution), we can also show by a similar proof that $\hat{d_x} = w^I - \sum_A \hat{p}_A(T^A)^Tz_A^*$ is also a subgradient of $\hat{h}$ at $x$ and $\norm{\hat{d_x}} \leq \lambda \norm{w^I}$. From the bound on $d_x$ and $\hat{d_x}$, one can show that the Lipschitz constant of $h$ and $\hat{h}$ is at most $K = \lambda\norm{w^I}$. Moreover, since $\hat{d_x}$ is the average over the random scenario samples, $\E[\hat{d_x}] = d_x$. This is exactly like if we have i.i.d sample $X_1, \cdots, X_k$ of some random variable $X$, $\E[(\sum X_i)/k] = \E[X]$ ($X_i$ is also a random variable with the same distribution as $X$)
\end{rmk}
\vspace{2mm}

Finally, we state a lemma that helps us upper-bound a random variable based on its independent samples.
\begin{lem}\label{lem5}
Let $X_i$ for $i \in 1, \cdots, N$ be i.i.d random variable where each $X_i \in [-a, b], a, b > 0, \alpha = max(1, a/b)$, and $c$ is an arbitrary positive number. Let $X = \dfrac{\sum_{i=1}^N X_i}{N}$ and $\mu = \E[X] = \E[X_i]$. Then:
$$\Prb(X \in [\mu-bc, \mu+bc]) \geq (1-\delta)$$
\end{lem}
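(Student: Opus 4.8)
The plan is to treat this as a two-sided concentration bound for a bounded i.i.d.\ average, and first to make explicit the hypothesis on $N$ that the statement silently needs. As written the claim cannot hold for every $N$: with $c$ a fixed arbitrary positive number, forcing $\Prb(|X-\mu| \le bc) \ge 1-\delta$ requires $N$ to be large (for $N=1$ the probability can be $0$). So I would first record the implicit assumption $N \ge \frac{2\alpha^2}{c^2}\ln\frac{2}{\delta}$ (the precise absolute constant being whatever is convenient), since this is exactly the $\log(1/\delta)$ dependence that matches the sample size $\mathcal{N}$ prescribed in the \texttt{SSA} algorithm.

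With that in hand, I would center the variables: set $Y_i = X_i - \mu$, so that $\E[Y_i] = 0$ and each $Y_i$ lies in an interval of the same width $a+b$ as $X_i$ (shifting by $\mu$ translates the interval but does not change its length). Then $X - \mu = \frac{1}{N}\sum_{i=1}^N Y_i$ is an average of independent, mean-zero, bounded variables, which is precisely the setting of Hoeffding's inequality:
$$\Prb\big(|X-\mu| \ge t\big) \le 2\exp\!\left(-\frac{2Nt^2}{(a+b)^2}\right).$$

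The key bookkeeping step is to convert the range $a+b$ into the normalized quantity $\alpha b$ appearing in the statement. Taking $t = bc$ gives an exponent $-\frac{2Nb^2c^2}{(a+b)^2} = -\frac{2Nc^2}{((a+b)/b)^2}$, so I need an upper bound on $(a+b)/b$ in terms of $\alpha$. A short two-case check handles this: if $a \ge b$ then $\alpha = a/b$ and $(a+b)/b = 1 + a/b \le 2a/b = 2\alpha$; if $a < b$ then $\alpha = 1$ and $(a+b)/b < 2 = 2\alpha$. Hence $(a+b)/b \le 2\alpha$ in all cases, giving
$$\Prb\big(|X-\mu| \ge bc\big) \le 2\exp\!\left(-\frac{Nc^2}{2\alpha^2}\right).$$
Substituting the assumed lower bound on $N$ forces the right-hand side below $\delta$, which is exactly $\Prb(X \in [\mu-bc,\mu+bc]) \ge 1-\delta$.

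I expect the only genuine subtlety to be this normalization step: correctly tracking how the asymmetric interval $[-a,b]$ interacts with the definition $\alpha = \max(1, a/b)$, and pinning down the constant in front of $\alpha^2/c^2$ so that it is consistent with the algorithm's choice of $\mathcal{N}$. Everything else — centering, invoking Hoeffding, and exponentiating the tail — is routine. If one instead prefers to stay with the \emph{Chebyshev-like} estimate mentioned earlier, the same scheme works by bounding $\mathrm{Var}(X) \le (a+b)^2/(4N) \le \alpha^2 b^2/N$ and applying Chebyshev's inequality, at the cost of a weaker $1/\delta$ (rather than $\log(1/\delta)$) dependence on $N$.
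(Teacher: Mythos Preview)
The paper does not actually supply a proof of this lemma; it is stated without argument and then invoked in the proof of \cref{thm3}. Your Hoeffding-based argument is correct, and you are right that the printed statement is missing a hypothesis on $N$: a lower bound of the form $N \ge C\alpha^2 c^{-2}\ln(2/\delta)$ is needed for the conclusion to hold. The sample size $\mathcal{N} = \frac{4(1+\lambda)^2}{3\omega^2}\ln\bigl(\frac{2mn}{\delta}\bigr)$ chosen in the proof of \cref{thm3} confirms that the intended dependence is logarithmic in $1/\delta$, so a Chernoff/Hoeffding bound---not Chebyshev---is what is really being used, the paper's informal ``Chebyshev-like'' description notwithstanding. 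Your normalization step $(a+b)/b \le 2\alpha$ is the only nonroutine point and is handled cleanly; the constant $2$ you obtain in $N \ge \frac{2\alpha^2}{c^2}\ln\frac{2}{\delta}$ differs slightly from the $4/3$ implicit in the paper's choice of $\mathcal{N}$, but that is just the difference between Hoeffding and a multiplicative Chernoff variant and does not affect the polynomial bound claimed in \cref{thm3}.
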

\vspace{2mm}

Now we come to the main theorem for performance guarantee of \texttt{SSA}:
\begin{thm}\label{thm3}
For any $\epsilon, \lambda > 0\ (\lambda < 1)$ with probability at least $1 - \delta$, any optimal solution $\hat{x}$ to the sample average problem constructed with $poly(\mathcal{I}, \lambda, \frac{1}{\gamma}, \ln(\frac{1}{\epsilon}), \ln(\frac{1}{\delta}))$ (polynomial function in term of those variables in the parentheses) samples satisfies $h(\hat{x}) \leq (1 + \gamma)OPT + 6\epsilon$. Here $OPT$ is the optimum value of $h$ described above.
\end{thm}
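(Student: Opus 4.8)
The plan is to deduce the theorem from \cref{lem2} by taking $g = h$ and $\hat g = \hat h$. Three ingredients are needed to invoke \cref{lem2}: that both functions are Lipschitz with the common constant $K = \lambda\norm{w^I}$, that $h(x^*) = OPT \geq 0$, and that the pair $(h,\hat h)$ satisfies property \cref{propA} on the extended grid $G$. The first is exactly the content of the remark following \cref{lem4}, and the second follows from the standing assumptions $w^I\cdot x \geq 0$ and $\sum_A p_A f_A(x) \geq 0$ on $\mathcal{P}$. Granting property \cref{propA}, \cref{lem2} immediately yields $h(\hat x) \leq (1+\gamma)h(x^*) + 6\epsilon = (1+\gamma)OPT + 6\epsilon$ (using $\gamma \leq 1$ as in \cref{lem2}), which is the claim. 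So essentially all the work is in verifying \cref{propA}.

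To verify \cref{propA} I would fix a grid point $x \in G$ and use the explicit dual formulas: by \cref{lem4}, $d_x = w^I - \sum_A p_A (T^A)^T z_A^*$ is a subgradient of $h$ at $x$, and by the accompanying remark $\hat d_x = w^I - \sum_A \hat p_A (T^A)^T z_A^*$ is a subgradient of $\hat h$ at $x$, where crucially the optimal scenario duals $z_A^*$ are the same for both functions, since each $f_A$ depends only on $x$ and $A$ and not on the distribution. Consequently $\hat d_x$ is exactly a sample average whose coordinatewise expectation is $d_x$. By \cref{lem3}, $\hat d_x$ is an $\omega$-subgradient of $h$ as soon as the coordinatewise sandwich $d_x - \omega w^I \leq \hat d_x \leq d_x$ holds, and then $\hat d_x$ is the single vector required by property \cref{propA}.

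The remaining step is probabilistic. For each coordinate $e$, the $i$-th sampled scenario contributes $w^I_e - ((T^{A_i})^T z_{A_i}^*)_e$ to $(\hat d_x)_e$; since $T^A \geq 0$ and the dual optimum is nonnegative, each contribution lies in an interval of the form $[-(\lambda-1)w^I_e,\, w^I_e]$ controlled by the definition of $\lambda$. I would therefore apply \cref{lem5} coordinatewise with $b = w^I_e$ and $c = \omega$ to conclude that $|(\hat d_x)_e - (d_x)_e| \leq \omega w^I_e$ with probability at least $1 - \delta'$, and then take a union bound over the $m$ coordinates and the $|G| \leq N(2R/\epsilon)^{2m}$ grid points supplied by \cref{lem1}, choosing $\delta' = \delta/(m|G|)$. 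Tracing the dependence of the required sample count on $\alpha \approx \lambda$, on $c = \omega = \gamma/(8N)$, and on $\log(m|G|/\delta)$ then reproduces the stated polynomial bound $\mathcal{N} = poly(\mathcal{I}, \lambda, 1/\gamma, \ln(1/\epsilon), \ln(1/\delta))$.

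The main obstacle is the direction of the bound demanded by \cref{lem3}: concentration naturally yields the two-sided estimate $|(\hat d_x)_e - (d_x)_e| \leq \omega w^I_e$, whereas \cref{lem3} requires the one-sided relation $\hat d_x \leq d_x$, which is what lets the error term $(d_x - \hat d_x)\cdot(v-x)$ be absorbed into $-\omega h(x)$ using $v, x \geq 0$. Reconciling these, either by arranging the concentration one-sidedly or by adjusting $\hat d_x$ downward while preserving its status as an exact subgradient of $\hat h$, is the delicate point, and it is precisely where the nonnegativity of $T^A$, the nonnegativity of the dual optimum $z_A^*$, and the containment $\mathcal{P} \subseteq \mathbb{R}^m_{\geq 0}$ are used.
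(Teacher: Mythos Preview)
Your outline is essentially the paper's own proof: reduce to \cref{lem2}, manufacture the witness $\hat d_x$ at each grid point via the dual formula from \cref{lem4} and its remark, use \cref{lem5} coordinatewise together with a union bound over the $m$ coordinates and the $|G|$ points (bounded by \cref{lem1}), and then invoke \cref{lem3}. The paper's argument is no more detailed than yours on any of these steps.

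The one place you go beyond the paper is your final paragraph: you correctly flag that concentration yields only the two–sided estimate $|(\hat d_x)_e-(d_x)_e|\le \omega w^I_e$, whereas \cref{lem3} as stated asks for the one–sided sandwich $d_x-\omega w^I\le \hat d_x\le d_x$. The paper simply writes ``component-wise close'' and invokes \cref{lem3}, so it glosses over exactly the point you raise. The resolution is benign and does not require tampering with $\hat d_x$: replaying the proof of \cref{lem3} under the two–sided bound and using $h(y)\ge w^I\cdot y$ (which follows from $\sum_A p_A f_A(y)\ge 0$) gives $h(y)-h(x)\ge \hat d_x\cdot(y-x)-\omega h(x)-\omega h(y)$; this slightly weaker notion of approximate subgradient is what the analysis in \cref{lem2} actually needs (or, equivalently, one halves $\omega$ and absorbs the extra term). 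So your concern is well placed but not an obstruction, and otherwise your proposal matches the paper.
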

\begin{proof}
We show that $h$ and $\hat{h}$ satisfy the property \cref{propA} with probability $1 - \delta$ with the stated sample size, and then the proof is completed by \cref{lem2}. Define $N = \log(\frac{2KR}{\epsilon})$, and $\omega = \frac{\lambda}{8N}$, and let $G$ be the extended $\frac{\epsilon}{KN\sqrt{m}}$ grid. Note that $\log(KR)$ is polynomially bounded in the input size. Let $n = |G|$. Using \cref{lem5}, if we sample $\mathcal{N} = \frac{4(1+\lambda)^2}{3\omega^2} \ln(\frac{2mn}{\delta})$ times to construct $\hat{h}$, then at a given point $x$, subgradient $\hat{d_x}$ of $\hat{h}$ is component-wise close to its expectation with probability at least $1-\frac{\delta}{n}$, so by \cref{lem3}, $\hat{d_x}$ is an $\omega$-subgradient of $h$ at $x$ with high probability. So with probability at least $1-\delta$, $\hat{d_x}$ is an $\omega$-subgradient of $h$ at every point $x \in G$. Using \cref{lem1} to bound $n$, we get that $\mathcal{N}  = O(m\lambda^2\log^2(\frac{2KR}{\epsilon})\log(\frac{2KRm}{\epsilon\delta}))$
\end{proof}
\vspace{2mm}

\section{Correlation Robust Stochastic Optimization}
\subsection{Correlation robust stochastic problem and correlation gap}
From section 1, we see that the two-stage stochastic combinatorial optimization problem can be represented in the compact form:
$$\min_{x \in C} \E[f(x, S)]$$
where $x$ is the decision variable in stage $1$. Note that $x$ is a vector indexed by element $e \in X$, and is the indicator of elements $F_0$ chosen in this stage that we mentioned in section $1$. Now the random set $S$ can only be realized in stage 2, and when it is realized, $f(x, S)$ is the minimum cost function of all elements $\in X$ needed to satisfy the realized set $S$ of clients. In stage 2, elements indicated by the decision variable will have cheaper cost (stage 1 cost).\\

Now if get $\mathcal{L}$ to be the set of all distribution $\mathcal{D}$ on the random subset of $V$, then we can rewrite our stochastic problem
as finding $\min_{x \in C} g(x)$ when $g(x)$ is the worst expectation cost over all distribution $\mathcal{D} \in \mathcal{L}$. In other word, $g(x) = \max_{\mathcal{D} \in \mathcal{L}} \E_{\mathcal{D}}[f(x, S)]$. \\

Now we further restrict $\mathcal{L}$ to the set of all distribution with fixed marginal distribution: $\Prb_{\mathcal{D}}(S: i \in S) = p_i$ for all $i \in V$ ($p_i$ fixed). We denote the restriction $\mathcal{L}$ by $\mathcal{L}_{\set{p_i}}$. Thus, $g(x)$ can be defined as $\max_{\mathcal{D}} \E_{\mathcal{D}}[f(x, S)]$ so that $\sum_{S: i \in S} \Prb_{\mathcal{D}}(S) = p_i\ \forall i \in V$. 

\begin{defi}
We keep $x$ fixed, and call a tuple $(V, f, \set{p_i})$ an instance. and let $\mathcal{D}^R$ to be the  distribution $\mathcal{D}$ in $\mathcal{L}_{\set{p_i}}$ that maximize the expectation of function $f$ above. We also define the distribution $\mathcal{D}^I$ with respect to the tuple $(V, f, \set{p_i})$ simply to be the independent Bernoulli distribution $\mathcal{D}^I(S) = \prod_{i \in S} p_i$. 
\end{defi}

\begin{defi}
Now we define the correlation gap $\kappa$ for the problem instance $(V, f, \set{p_i})$ as the ratio:
$$\kappa = \frac{\E_{\mathcal{D}}^R[f(x, S)]}{\E_{\mathcal{D}^I}[f(x, S)]}$$ 
\end{defi}

The idea is that if we can bound the correlation gap $\kappa$ by some number independent of $x$ and $\set{p_i}$, then we can go over each tuple $\set{p_i}$ and $x$, so that at the end we can bound optimal value with respect to the worst-case distribution by an optimal value with respect to some independent distribution. Thus, we can use the independent distribution for the sampling methods we describe in section $2$ to get a good approximation of the original two-stage stochastic combinatorial optimization problem. Then our goal is accomplished. Now in section 2, we will bound $\kappa$ with the assumption that the function $f$ and $V$ in our problem has some specific form.

\subsection{The bound on correlation gaps of specific problem}
We will start with definition of a $(\eta, \beta)$-cost-sharing scheme.
\begin{defi}
A cost-sharing scheme can be defined as a function $\chi$ so that for each subset $S$ of $V$ with an ordering $\sigma_S$ on $S$, and each $i \in S$, $\chi$ maps the tuple $(i, S, \sigma_S)$ to the non-negative number $\chi(i, S, \sigma_S)$. For a fixed $x$, consider the cost function $f(x, S) = f(S)$ as a function on $2^|V|$. We associate with $f$ a cost-sharing scheme, and call this scheme the cost-sharing scheme for allocating the cost $f$. The cost sharing scheme basically specify the share of $i$ in $S$. Now we define the following three properties of a cost sharing scheme for allocating the cost $f$:  \textbf{$\bm{\beta}$- budget balance},  \textbf{cross-monotonicity}, and \textbf{weak $\bm{\eta}$-summability}
\begin{enumerate}
\item \textbf{$\bm{\beta}$-budget balance}: For all $S$, and ordering $\sigma_S$ on $S$:
$$f(S) \geq \sum_{i=1}^{|S|} \chi(i, S, \sigma_S) \geq \frac{f(S)}{\beta}$$

\item\textbf{Cross-monotonicity}: For all $i \in S$, $S\subset T$, and $\sigma_S \subset \sigma_T$:
$$\chi(i, S, \sigma_S) \geq \chi(i, T, \sigma_T)$$
Here $\sigma_S \subset \sigma_T$ means that the ordering $\sigma_S$ is a restriction of the ordering $\sigma_T$ to subset $S$. 

\item \textbf{Weak $\bm{\eta}$-summability}: For all $S$, and orderings $\sigma_S$ on $S$:
$$\sum_{l=1}^{|S|}\chi(i_l, S_l, \sigma_{S_l}) \leq \eta f(S)$$
where $i_l$ is the $l$th element and $S_l$ is the set of the first $l$ memebers of $S$ according to ordering $\sigma_S$. And $\sigma_{S_l}$ is the restriction of $\sigma_S$ on $S_l$.
\end{enumerate}

We call a cost-sharing scheme satisfying the above three properties an $(\eta, \beta)$-cost-sharing scheme.
\end{defi}

\begin{defi}
A function $f(x, S)$ is non-decreasing in $S$ if for every $x$ and every $S \subset T$, $f(x, S) \leq f(x, T)$.
\end{defi}

In this section, we will bound the correlation gap for two-stage stochastic combinatorial optimization problems with a non-decreasing cost function and an $(\eta, \beta)$-cost-sharing scheme. To prove the bound for a general instance $(f, V, \set{p_i})$, we reduce it to a special instance $(f', V', \set{p_i'})$ by applying a \textbf{split} operation to the original instance, and then prove the bound for the special instance instead. Let us start with the definition of this \textbf{split} operation.

\begin{defi}\label{split_opt}
\textbf{Split} operation is a map from one problem instance to a new problem instance. Given a problem instance $(f, V, \set{p_i})$, and integers $\set{n_i \geq 1, i \in V}$, we define a new instance $(f', V', \set{p_j'})$ as follows: split each item $i \in V$ into $n_i$ copies $C_1^i, C_2^i, \cdots, C_{n_i}^i$, and assign a marginal probability of $p'_{C_k^i} = \frac{p_i}{n_i}$ to each copy. Let $V'$ denote the new ground set that contains all the duplicates. Now we define a new cost function $f': 2^{V'} \to \mathbb{R}$ as:
$$f'(S') = f(\Pi(S')),\text{ for all } S' \subset V'$$
where $\Pi(S') \subset V$ is the orignal subset of elements whose duplicates appear in $S'$, i.e. $\Pi(S') = \set{i \in V| C_k^i \in S' \text{for some } k \in \set{1, 2, \cdots, n_i}}$\\

We say that we apply \textbf{split} operation to obtain new instance $(f', V', \set{p_j'})$ from $(f, V, \set{p_i})$ by splitting each item $i \in V$ into some $n_i$ copies.
\end{defi}

Now we state the main result for the upper bound of the correlation gap:
\begin{thm}\label{thm4}
For any instance $(f, V, {p_i})$, if for all feasible $x$, the cost function $f(x, S)$ is non-decreasing in $S$, and has an $(\eta, \beta)$-cost-sharing scheme for elements in $S$, then teh correlation gap is bounded by $\eta \beta \left(\frac{e}{e-1}\right)$
\end{thm}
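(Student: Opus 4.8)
first reduce the general instance to a tractable "split" instance via Definition~\ref{split_opt}, and then establish the bound $\eta\beta\left(\frac{e}{e-1}\right)$ on the split instance directly. The reduction is the conceptual heart: by splitting each item $i$ into $n_i$ copies each with marginal $p_i/n_i$, and letting all $n_i \to \infty$, the independent distribution $\mathcal{D}^I$ on the split ground set $V'$ becomes, in the limit, a \emph{Poisson}-type process where each original item is independently included with probability approaching $1 - e^{-p_i}$, but more importantly the per-copy marginals become infinitesimally small, so that each copy is "almost never" chosen twice. Because $f'(S') = f(\Pi(S'))$ depends only on which original items are represented, the correlation gap of the split instance converges to (and bounds) that of the original instance. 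The first step I would carry out is therefore to verify that $f'$ inherits the non-decreasing property and that the $(\eta,\beta)$-cost-sharing scheme lifts to $V'$ in the natural way (sharing cost among copies), so the hypotheses survive the reduction.

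\textbf{Second, I would analyze the worst-case (numerator) and independent (denominator) expectations on the split instance.} For the numerator $\E_{\mathcal{D}^R}[f'(x,S')]$, I would exploit $\beta$-budget balance: the worst-case distribution $\mathcal{D}^R$ can have its cost upper-bounded by summing, over every element, its expected cost-share, and since $\chi$ sums to at most $f'(S')$ (within the factor $\beta$), the total worst-case cost is controlled by $\beta \sum_{i} \Pr_{\mathcal{D}^R}(i \in S') \cdot (\text{marginal share})$, which by the fixed-marginal constraint depends only on the $p_i'$. For the denominator $\E_{\mathcal{D}^I}[f'(x,S')]$, I would use \emph{weak $\eta$-summability}: ordering the realized elements and summing the incremental shares gives a lower bound of $\frac{1}{\eta} \E_{\mathcal{D}^I}[\sum_l \chi(i_l, S_l, \sigma_{S_l})]$. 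The key is that under the independent distribution one can take expectations term-by-term, and cross-monotonicity ensures each element's share, conditioned on its own inclusion, is at least its share in the full realized set.

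\textbf{Third comes the source of the $\frac{e}{e-1}$ factor}, which I expect to be the main obstacle. The combinatorial content is showing that when elements arrive independently with small probabilities, the \emph{expected} incremental cost-share captured by summability recovers at least a $(1 - 1/e)$ fraction of what the budget-balance bound assigns to the worst case. Concretely, fix an element $i$ and condition on it being included; the probability that $i$ is the one "paying" a large share versus having that share already absorbed by earlier-arriving copies of the \emph{same} original item is governed by the split structure. In the $n_i \to \infty$ limit, the probability that a given copy is the \emph{first} copy of item $i$ to appear, given that item $i$ appears at all, integrates against the Poissonization to yield exactly the factor $\frac{p_i}{1 - e^{-p_i}} \cdot \frac{1-e^{-p_i}}{p_i}$-type expressions whose worst case over $p_i \in (0,1]$ is $\frac{e}{e-1}$. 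I would make this precise by a single-variable optimization: bounding $\frac{1 - e^{-p}}{p} \geq 1 - \frac{1}{e}$ for $p \in (0,1]$, which is where the constant is extracted.

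\textbf{Finally I would assemble the three bounds}: combining the $\beta$ from budget balance on the numerator, the $\eta$ from summability on the denominator, and the $\frac{e}{e-1}$ loss from the Poissonization argument, the ratio $\kappa = \E_{\mathcal{D}^R}[f]/\E_{\mathcal{D}^I}[f]$ telescopes to $\eta\beta\left(\frac{e}{e-1}\right)$, and since the split reduction only passes to a limit that preserves or increases the gap, the same bound holds for the original instance $(f, V, \set{p_i})$. The delicate point throughout will be interchanging the limit $n_i \to \infty$ with the expectations and ensuring the cost-sharing scheme on $V'$ genuinely satisfies all three properties uniformly in $n_i$; I would isolate that as a lemma before invoking Lemma-style estimates, since a careless limit could lose the fixed-marginal constraint that makes the whole reduction valid.
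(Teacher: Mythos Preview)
Your high-level plan --- split, then analyze --- matches the paper, but the actual mechanism you propose diverges from it and, as written, has a real gap.

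\textbf{Where you diverge.} You send $n_i \to \infty$ and aim for a Poisson limit, extracting $1-1/e$ from the single-variable bound $(1-e^{-p})/p \geq 1-1/e$. The paper does \emph{not} take a limit. It applies the split operation a \emph{finite} number of times for two purposes: first so that the sets in the support of the worst-case distribution $\mathcal{D}^R$ become pairwise disjoint, and second so that every marginal equals $1/K$ for a common integer $K$. After this, the worst-case distribution on the new instance is forced to be of \emph{$K$-partition type}: it is the uniform distribution on a partition $A_1,\dots,A_K$ of $V'$. The entire analysis (Lemma~\ref{lem6}) then takes place in this finite, highly structured setting, and the $e/(e-1)$ comes from $1-(1-1/K)^K \geq 1-1/e$, not from a Poisson limit.

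\textbf{Where your argument has a gap.} Your second movement tries to upper-bound the numerator $\E_{\mathcal{D}^R}[f']$ via $\beta$-budget balance and the fixed marginals, writing something like ``$\beta \sum_i \Pr_{\mathcal{D}^R}(i \in S')\cdot(\text{marginal share})$.'' But budget balance gives $f(S) \leq \beta \sum_{i \in S}\chi(i,S,\sigma_S)$, and the cost share $\chi(i,S,\sigma_S)$ depends on the realized set $S$, not just on the marginal $p_i'$. Without further structure on $\mathcal{D}^R$ you cannot convert this into a quantity that depends only on the marginals. This is precisely the obstacle the paper's reduction removes: once $\mathcal{D}^R$ is uniform over a partition $A_1,\dots,A_K$, the numerator is simply $\frac{1}{K}\sum_k f(A_k)$, and the denominator is bounded below by a recursive inequality $\phi(V) \geq (1-1/K)\phi(V\setminus A_1) + \frac{1}{\beta K}f(A_1)$ obtained by peeling off one block at a time using cross-monotonicity and budget balance. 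Your Poisson route never identifies what the worst-case distribution looks like on the split instance, so the numerator side is uncontrolled. The lemmas you correctly anticipate needing (that $f'$ stays non-decreasing, that the cost-sharing scheme lifts, that $\mathcal{L}$ is preserved and $\mathcal{I}$ only decreases) are exactly Lemmas~\ref{lem7}--\ref{lem10}, but they feed into the $K$-partition analysis of Lemma~\ref{lem6}, not a limiting argument.
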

\begin{proof}
Assume that $\mathcal{D}$ is the worst-case distribution, the distribution that minimizes the expected cost. We only consider subset $U \subseteq V$ with non-zero probability in $\mathcal{D}$. Whenever two such sets $U$ intersect, we repeatly apply the \textbf{split} operation and split one element that belongs to many subsets $U$ into different copies until we get a new instance $(f', V', \set{p'_i})$ such that no such intersections exist.

Then suppose we get the marginal probability $\set{p_i = \frac{K_i}{K}}$ where $K_i, K \in \mathbb{Z}$ and $K$ is common denominator of ration $\set{p_i}$, then we can apply another split operation by dividing $i$th element into $N_i$ copies so that we get the new marginal probability $p_i = 1/K\ \forall i$. We can combine these two split operations into a single split operation. We start by looking at the reassignment of the distribution in \cref{lem8} because this reassignment gives us the new worst-case distribution for the new instance after the split operation. We see that no probability is assigned to any subset $S'$ of $V'$ containing more than one copy of some items. Thus, after the above combined split operation, in the new instance (with new worst-case distribution) the random set $S$ can only be one of the $K$ sets in a particular partition $\set{A_1, \cdots, A_M}$ of $V$ and the probability a random set $S$ is some $A_j$ is exactly the marginal probability $p_i$ for any $i \in A_j$, and this is $1/K$. Hence, $M = K$, and the worst-case distribution on subsets $S$ of $V$ becomes the uniform distribution of the collection of these $K$ disjoint set $\set{A_k}_{k = 1}^K$. We called this worst-case distribution the \textbf{K-partition-type} distribution\\

Recall that $\mathcal{L}$ and $\mathcal{I}$ are expected cost of worst-case distribution and independent Bernoulli distribution respectively. By \cref{lem8}, split operation preserves $\mathcal{L}$, and by lemma \cref{lem9}, the operation only decreases I, so that the correlation gap of the original instance is bounded by the correlation gap of the new transformed instance with worst-case distribution being uniform distribution. By \cref{lem7}, the new cost function for the new instance is also non-decreasing. By \cref{lem10}, there exists a cost-sharing function $\chi'$ for the new instance that is $\beta$-budget balance, cross-monotone, and weak $\eta$ summability for specific subsets $S' \subseteq T' \subseteq V'$ (and $\sigma_{S'}$ and $\sigma_{T'}$). Thus by invoking the \cref{lem6}, we get the correlation gap upper bound for the special case (new transformed instance). Hence, this is also the upper bound for the correlation gap in the original instance.
\end{proof}

\begin{lem}\label{lem6}
For instances $(f, V, \set{p_i})$ such that:
\begin{itemize}
	\item $f(S)$ is non-decreasing
	\item marginal probabilities $p_i$ are all equal to $1/K$ for some integer $K$
	\item the worst case distribution is a $K$-partition type distribution,
\end{itemize}
the correlation gap is bounded as
$$\frac{\mathcal{L}(f, V, \set{1/K})}{\mathcal{I}(f, V, \set{1/K})} \leq \eta \beta \frac{e}{e-1}$$	
\end{lem}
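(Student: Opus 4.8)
The goal is to prove Lemma~\ref{lem6}: for a $K$-partition-type instance with equal marginals $1/K$ and non-decreasing $f$ equipped with an $(\eta,\beta)$-cost-sharing scheme, the correlation gap is at most $\eta\beta\frac{e}{e-1}$. The plan is to handle numerator and denominator separately and combine them. The numerator $\mathcal{L}$ is clean: since the worst-case distribution is uniform over the $K$ disjoint blocks $\{A_1,\dots,A_K\}$, we have $\mathcal{L} = \frac{1}{K}\sum_{k=1}^{K} f(A_k)$. First I would record this exact expression, since it will be the quantity the denominator must be compared against.

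The main work is lower-bounding the independent-distribution expectation $\mathcal{I} = \E_{\mathcal{D}^I}[f(S)]$, where each element of $V$ is included independently with probability $1/K$. The strategy here is to use the cost-sharing scheme to extract, for each block $A_k$, a contribution to $\mathcal{I}$. Fix a block $A_k$ and an ordering $\sigma$ on its elements. For a random set $S$ drawn from $\mathcal{D}^I$, I would look at the elements of $A_k$ that land in $S$, take them in the order induced by $\sigma$, and sum their cost shares $\chi(i_l, S_l, \sigma_{S_l})$ computed \emph{within the realized prefix restricted to $A_k$}. Weak $\eta$-summability gives that this sum is at most $\eta f(S\cap A_k)$, while cross-monotonicity lets me compare the share $i$ receives inside $S$ to the share it would receive inside the full block $A_k$ (since $S\cap A_k \subseteq A_k$, the share only increases when we shrink the set). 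The point of this bookkeeping is to turn a statement about $f(S)$ into a sum over individual elements whose expected contribution I can compute, because each element of $A_k$ appears independently with probability $1/K$.

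Carrying this out, the expected total share collected across all blocks should come out to roughly $\sum_k \frac{1}{\beta}\bigl(1-(1-\tfrac1K)^{|A_k|}\bigr)f(A_k)$, where the $1/\beta$ comes from $\beta$-budget balance applied to each block and the probability factor $1-(1-1/K)^{|A_k|}$ is the chance that block $A_k$ contributes at least one element. The elementary inequality $1-(1-1/K)^{n} \geq 1-e^{-n/K} \geq (1-\tfrac1e)\cdot\min(1,n/K)$, together with the fact that the blocks partition $V$ so $\sum_k |A_k| = |V| \le$ (the relevant count making the per-block factor at least $1-1/e$), yields $\mathcal{I} \geq \frac{1}{\beta}\cdot\frac{e-1}{e}\cdot\frac{1}{K}\sum_k f(A_k)\cdot\frac{1}{\eta}$ after dividing by the $\eta$ lost to summability. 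Comparing with the expression for $\mathcal{L}$ then gives $\mathcal{L}/\mathcal{I} \leq \eta\beta\frac{e}{e-1}$.

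The hard part will be the second paragraph: correctly coupling the cost-sharing scheme's three properties so that the per-element expected contribution is bounded below. The subtlety is that weak $\eta$-summability is an \emph{upper} bound on a prefix-sum of shares while I need a \emph{lower} bound on $\mathcal{I}$, so I must orient the inequalities carefully — cross-monotonicity is what lets me lower-bound the share an element gets in the small random set by its share in the fixed block, and $\beta$-budget balance is what converts the block-level share total back into $f(A_k)$. Getting the factor $1-1/e$ to emerge with the right direction, and verifying that the independence of inclusion across elements of a single block is exactly what produces the geometric $(1-1/K)^{|A_k|}$ term, is where I expect the calculation to require the most care. I would assume Lemmas~\ref{lem7}--\ref{lem10} and the reduction in Theorem~\ref{thm4} throughout, so that it suffices to treat this clean partition-type instance.
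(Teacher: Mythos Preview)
Your plan has a genuine gap at the combining step. You propose to apply weak $\eta$-summability \emph{within each block} $A_k$, obtaining for each $k$ the chain
\[
\eta\, f(S\cap A_k)\;\ge\;\sum_{l}\chi\bigl(i_l,(S\cap A_k)_l,\sigma\bigr)\;\ge\;\sum_{i\in S\cap A_k}\chi(i,A_k,\sigma),
\]
and then, after taking expectations and using $\beta$-budget balance on $A_k$, a lower bound of order $\frac{1}{K\beta}f(A_k)$. The problem is how you pass from these per-block inequalities to a bound on $\E[f(S)]$. Monotonicity gives only $f(S)\ge f(S\cap A_k)$ for each fixed $k$; it does \emph{not} give $f(S)\ge\sum_k f(S\cap A_k)$, so you cannot add the block contributions. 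What your argument actually yields is $\mathcal I\ge\frac{1}{\eta K\beta}\max_k f(A_k)$, which is a factor of $K$ short of the needed $\mathcal I\ge\frac{1}{\eta\beta}(1-\tfrac1e)\cdot\frac{1}{K}\sum_k f(A_k)$. The factor $1-(1-1/K)^{|A_k|}$ you write down (the probability the block is hit) does not arise from the computation you describe and would not rescue the summation issue anyway.

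The paper fixes this by applying weak $\eta$-summability \emph{once} to the whole random set $S$, using a single global ordering $\sigma$ on $V$ in which the blocks are arranged so that $A_K$ precedes $A_{K-1}$, etc., with the blocks sorted so that $f(A_1)\ge\cdots\ge f(A_K)$. This yields $\mathcal I\ge\frac{1}{\eta}\phi(V)$ with $\phi(V)=\E_S\bigl[\sum_l\chi(i_l,S_l,\sigma_{S_l})\bigr]$, and then one proves the recursion $\phi(V)\ge(1-p)\,\phi(V\setminus A_1)+\frac{p}{\beta}f(A_1)$ by splitting the prefix sum at the boundary between $V\setminus A_1$ and $A_1$, using cross-monotonicity to enlarge each $S_l$ to $S_{-1}\cup A_1$ (a superset of $S_l$, which is the direction cross-monotonicity allows), and then $\beta$-budget balance on $S_{-1}\cup A_1$ together with monotonicity $f(S_{-1}\cup A_1)\ge f(A_1)$. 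Unrolling gives $\phi(V)\ge\frac{p}{\beta}\sum_k(1-p)^{k-1}f(A_k)$; since both $(1-p)^{k-1}$ and $f(A_k)$ are decreasing in $k$, a Chebyshev-sum argument replaces the geometric weights by their average $(1-(1-1/K)^K)/K\ge(1-1/e)/K$, and the bound follows. The two ingredients you are missing are the single global application of $\eta$-summability (so the block contributions live inside one inequality and can be summed) and the recursive peel with the sorted block order that produces the $(1-1/e)$ factor via Chebyshev.
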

\begin{proof}
Let the optimal $K$-partition corresponding to the worst case distribution is $\set{A_1, \cdots, A_K}$. Then by the definition of worst case distribution, we obviously have $\mathcal{L}(f, V, {1/K}) = \sum_{k=1}^K \frac{1}{K}f(A_k)$. Assume w.l.o.g that $f(A_1) \geq f(A_2) \geq \cdots f(A_K)$. Fix an order $\sigma$ on elements of $V$ such that for all $k$, the elements of $A_k$ come before $A_{k-1}$. For every set $S$, let $\sigma_S$ be the restriction of ordering $\sigma$ on set elements of set $S$. Let $\chi$ be the $(\eta, \beta)$ cost-sharing scheme for function $f$. Then by weak $\eta$-summability of $\chi$:
\begin{equation}\label{IByPhi}
\mathcal{I}(f, V, \set{1/K}) = \E_{S \subseteq V}[f(S)] \geq \frac{1}{\eta} \E_{S \subseteq V}[\sum_{l=1}^|S| \chi(i_l, S_l, \sigma_{S_l})]
\end{equation}
where the expected value is taken over the independent distribution. Also, every probability and expectation in this proof will be from the independent distribution. For the worst case distribution, we already have the (simple) above deterministic sum for $\mathcal{L}(f, V, \set{1/K})$.\\

Denote $\phi(V):= \frac{1}{\eta} \E_{S \subseteq V}[\sum_{l=1}^{|S|} \chi(i_l, S_l, \sigma_{S_l})]$. We will give a lower bound for $V$ recursively, and therfore give the needed lower bound for $\mathcal{I}(f, V, \set{1/K})$. We will show that
\begin{equation}\label{recursivePhi}
\phi(V) \geq (1-p) \phi(V\setminus A_1) + \frac{1}{\beta} f(A_1)
\end{equation}
We will now prove \cref{recursivePhi} by dividing $\chi$-terms in the expectation in $\phi_V$ into parts have intersection with $A_1$ and parts haven't. For each $S \subseteq V$, let $S_1 = S \cap A_1$, and $S_{-1} = S\cap (V\setminus A_1)$. Beccause of the choice of ordering $\sigma$ and $\set{\sigma_S}_S$, elements in $A_1$ come after elements in $V\setminus A_1$ in any ordering $\sigma_S$. Therefore, all elements of $S_{-1}$ comes before elements of $S_1$. Hence the set of first $|S_{-1}|$ elements of $S$ is exactly $S_{-1}$, and for $l \leq |S_{-1}|, S_l \subset S_{-1}$, and $i_l \in S_1$ for $l > S_{-1}$.
\begin{equation}\label{phi_sum}
\phi(V) = \E_S[\sum_{i=1}^{|S_{-1}|} \chi(i_l, S_l, \sigma_{S_l})] + \E_S[\sum_{l =|S_{-1}|+1}^{|S|} \chi(i_l, S_l, \sigma_{S_l})]
\end{equation}

Because $S_l \subset S \cup A_1$, by cross-monotocity of $\chi$, the second term of $\phi(V)$ in \cref{phi_sum} is bounded below by:
$\E_S[\sum_{l = |S_{-1}|+1}^|S| \chi(i_l, S_l, \sigma_{S_l})] \geq \E_S[\sum_{l =|S_{-1}|+1} ^|S| \chi(i_l, S\cup A_1, \sigma_{S \cup A_1})]$
But because $S^{-1}$ and $S_1$ as set-valued random variables are independent, we can further simplify this lower bound (for the second term):
\begin{align*}
&\E_S[\sum_{l =|S_{-1}|+1} ^{|S|} \chi(i_l, S\cup A_1, \sigma_{S \cup A_1})]\\
&= \E_{S_{-1}}\left[\E_{S_1} \left[\sum_{l=|S_{-1}|+1}^{|S|} \chi(i_l, S_{-1} \cup A_1, \sigma_{S_{-1} \cup A_1})|S_{-1}\right] \right]\\
&=  \E_{S_{-1}}\left[p\sum_{i \in A_1} \chi(i, S_{-1} \cup A_1, \sigma_{S_{-1} \cup A_1})\right] \quad (p = 1/K)\\
&= p \E_{S_{-1}}\left[\sum_{i \in A_1} \chi(i, S_{-1} \cup A_1,  \sigma_{S_{-1} \cup A_1})\right]
\end{align*}
Here instead of taking the entire expectation over $S$, we can keep $S_{-1}$ fixed and take expectation over $S_1$, and then take expectation over $S_{-1}$, and this gives the first equality. This rearrangement is possible because the distribution over $S$ is just the product of the distribution over $S_1$ and $S_{-1}$. Also note that $S\cup A_1 = S_{-1} \cup A_1$.\\

Now in the second equality, the only random variable in the inner expectation (wrt $S_{-1}$) is $i_l$, and basically we are summing over $i \in S_1$ in the inner sum. Now given fixed $S^{-1}$, each $i \in A_1$ is in $S_1$ if and only if it belongs to the random set $S$. Thus, $\Prb_{S_{-1}}(i \in S_1) = \Prb(i \in S) = p = 1/K$.  By considering the inner expectation as the weighted sum of $\chi$ over $i \in A_1$ with weights being $\Prb_{S_{-1}}(i \in S_1)$, we obtain the second inequality.\\

Again, by using indepedence and cross monotonicity, we can bound the first term of \cref{phi_sum}:
\begin{align*}
&\E_S[\sum_{l=1}^{|S_{-1}|} \chi(i_l, S_l, \sigma_{S_l})] = \E_{S_{-1}}[\sum_{l=1}^{|S_{-1}|} \chi(i_l, S_l, \sigma_{S_l})] \\
&\geq (1-p)\E_{S_{-1}}[\sum_{l=1}^{|S_{-1}|} \chi(i_l, S_l, \sigma_{S_l})] + p\E_{S_{-1}}[\sum_{l=1}^{|S_{-1}|} \chi(i_l, S_{-1} \cup A_1, \sigma_{S_{-1} \cup A_1})] \\
&= (1-p)\phi(V \setminus A_1) + p \E_{S_{-1}}[\sum_{l=1}^{|S_{-1}|} \chi(i_l, S_{-1} \cup A_1, \sigma_{S_{-1} \cup A_1})]
\end{align*}
The first equality also follows from taking expectation over $S_{-1}$ and then $S_{1}$ and from the fact that the function we take expectation over only depends on $S_{-1}$. \\

Now combining lower bounds for two terms of \cref{phi_sum}, and then using $\beta$-budget balanced property for $S_{-1} \cup A_1$, we get:
\begin{align*}
\phi(V) &\geq (1-p)\phi(V\AA_1) \\
&+ p \E_{S_{-1}}\left[\sum_{l=1}^{|S_{-1}|} \chi(i_l, S_{-1} \cup A_1, \sigma_{S_{-1} \cup A_1})+ \sum_{i \in A_1} \chi(i, S_{-1} \cup A_1,  \sigma_{S_{-1} \cup A_1}) \right]\\
&\geq (1-p)\phi(V\setminus A_1) + \frac{1}{\beta}p \E_[f(S_{-1} \cup A_1)]\\
&\geq (1-p) \phi(V\setminus A_1) + \frac{1}{\beta}p f(A_1)
\end{align*}
The last inequality follows from monotonicity of $f$.\\

From here by recursively expanding the inequality for $A_2, \cdots, A_K$, we get:
$$\phi(V) \geq \frac{1}{\beta}p \sum_{k=1}^K (1-p)^{k-1}f(A_k)$$
The RHS is a weighted sum of $f(A_k)$. Because both $f(A_k)$ and the weights $(1-p)^{k-1}$ is decreasing in $k$, we can replace the weights in the above weighted sum of $f(A_k)$ by the uniform weights $\dfrac{\sum_{k=1}^K(1-p)^{k-1}}{K}$ to get a lower bound:
\begin{align*}
\phi(V) &\geq  \frac{1}{\beta}p \sum_{k=1}^K f(A_k) \frac{\sum_{k=1}^K(1-p)^{k-1}}{K} = \frac{1}{\beta} \sum_{k=1}^K \frac{1-(1-p)^K}{pK}  pf(A_k)\\
&=\frac{1}{\beta} \sum_{k=1}^K \left(1-\left(1-\frac{1}{K}\right)^K\right)  pf(A_k)\geq \frac{1}{\beta}\left(1 - \frac{1}{e}\right)\sum_{k=1}^K pf(A_k) \quad (p = 1/K)
\end{align*}
The last inequality is because the \\

Thus, by \cref{IByPhi},
$$\mathcal{I}(f, V, \set{1/K}) \geq \frac{1}{\eta \beta}\left(1-\frac{1}{e}\right)(\sum_{k=1}^K pf(A_k)) = \frac{1}{\eta \beta}\left(1-\frac{1}{e}\right) \mathcal{L}(f, V, {1/K})$$
\end{proof}

Now we will state a few properties of \textbf{split} operations in the following four lemmas. We denote $(f', V', \set{p_j'})$ as the ouput of the \textbf{split} operation applying on the instance $(f, V, \set{p_i})$ in all of these lemmas.
\begin{lem}\label{lem7}
If $f(S)$ is a non-decreasing function, then so is $f'$
\end{lem}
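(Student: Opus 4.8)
The plan is to reduce the monotonicity of $f'$ entirely to the monotonicity of the projection map $\Pi$, exploiting the fact that by the definition of the \textbf{split} operation we have $f' = f \circ \Pi$ while $f$ is assumed non-decreasing. Concretely, I would fix two subsets $S' \subseteq T' \subseteq V'$ of the new ground set and aim to establish $f'(S') \leq f'(T')$, which is exactly what ``non-decreasing'' demands for $f'$.

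First I would verify that $\Pi$ is itself monotone with respect to set inclusion. Suppose $i \in \Pi(S')$. By the definition of $\Pi$ there is some copy $C_k^i \in S'$ with $k \in \set{1, \ldots, n_i}$; since $S' \subseteq T'$, this same copy satisfies $C_k^i \in T'$, and hence $i \in \Pi(T')$. As $i \in \Pi(S')$ was arbitrary, this yields $\Pi(S') \subseteq \Pi(T')$.

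Then I would simply invoke the hypothesis that $f$ is non-decreasing on $2^V$, applied to the pair $\Pi(S') \subseteq \Pi(T')$, to obtain $f(\Pi(S')) \leq f(\Pi(T'))$. Rewriting both sides via $f'(\cdot) = f(\Pi(\cdot))$ gives $f'(S') \leq f'(T')$, completing the argument.

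There is essentially no hard step: the entire content is the observation that a non-decreasing function composed with an inclusion-preserving set map stays non-decreasing, and the inclusion-preservation of the preimage-style map $\Pi$ is immediate from its set-theoretic definition. The only point meriting a word of care is confirming that $\Pi$ is well defined on all of $2^{V'}$ (for instance $\Pi(\emptyset) = \emptyset$), so that the composition $f \circ \Pi$ is defined for every pair $S' \subseteq T'$ and the comparison above is legitimate throughout.
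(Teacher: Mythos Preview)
Your proposal is correct and follows essentially the same approach as the paper: show $\Pi(S') \subseteq \Pi(T')$ whenever $S' \subseteq T'$, then apply the monotonicity of $f$ together with $f' = f \circ \Pi$. The only cosmetic difference is that the paper phrases the inclusion $\Pi(S') \subseteq \Pi(T')$ as a proof by contradiction, whereas your direct element-chasing argument is cleaner.
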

\begin{proof}
For any $S' \subseteq T' \subseteq V'$, $\Pi(S') \subseteq \Pi(T')$ because if $\Pi(S')$ has some new element, $S'$ will contain a copy of an element that is not the same as any original element in $V$ of copies in $T'$. Then $S'$ cannot be a subset of $T'$. Contradiction. Thus, by definition of $f'$ and monotonicity of $f$, we have:
$$f'(S') = f(\Pi(S')) \leq f(\Pi(T')) = f'(T')$$
\end{proof}

\begin{lem}\label{lem8}
If the cost function $f$ is non-decreasing in $S$, then the splitting procedure does not change the worst-case expected value:
$$\mathcal{L}(f, V, \set{p_i}) = \mathcal{L}(f', V', \set{p_j'})$$
\end{lem}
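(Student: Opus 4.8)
The plan is to prove the two inequalities $\mathcal{L}(f', V', \set{p_j'}) \geq \mathcal{L}(f, V, \set{p_i})$ and $\mathcal{L}(f, V, \set{p_i}) \geq \mathcal{L}(f', V', \set{p_j'})$ separately, in each case transporting a worst-case distribution on one side to a feasible distribution on the other that achieves at least the same expected cost. The one structural fact I will use repeatedly is that $f'$ sees only the projection, $f'(S') = f(\Pi(S'))$, so $f'$ is constant on the fibers of $\Pi$.

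For the first inequality I would take a worst-case distribution $\mathcal{D}$ for the original instance and build $\mathcal{D}'$ by a randomized lift: sample $S$ from $\mathcal{D}$, then replace each $i \in S$ by a copy $C_k^i$ chosen uniformly and independently among its $n_i$ copies, producing a random set $S'$. By construction $\Pi(S') = S$ always, so $f'(S') = f(S)$ and the expected cost is preserved exactly. The marginal of copy $C_k^i$ is $\Prb_{\mathcal{D}}(i \in S)\cdot \tfrac{1}{n_i} = p_i/n_i = p'_{C_k^i}$, so $\mathcal{D}'$ is feasible for the split instance, and hence $\mathcal{L}(f', V', \set{p_j'}) \geq \E_{\mathcal{D}'}[f'] = \E_{\mathcal{D}}[f] = \mathcal{L}(f, V, \set{p_i})$.

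For the reverse inequality I would take a worst-case distribution $\mathcal{D}'$ for the split instance and push it forward under $\Pi$, setting $\mathcal{D}(S) = \Prb_{\mathcal{D}'}(\Pi(S') = S)$. Since $f' = f \circ \Pi$, the expected cost is again preserved: $\E_{\mathcal{D}}[f] = \E_{\mathcal{D}'}[f']$. The difficulty is that this $\mathcal{D}$ need not be feasible: its marginal on $i$ equals $\Prb_{\mathcal{D}'}(\exists k : C_k^i \in S')$, which by a union bound is at most $\sum_k p'_{C_k^i} = p_i$, with strict inequality exactly when two copies of $i$ co-occur. This is where monotonicity enters. Because $f$ is non-decreasing, I can repair the deficient marginals one coordinate at a time: for each $i$ with marginal $\tilde p_i < p_i$, move probability mass $p_i - \tilde p_i$ from sets $S \not\ni i$ (there is enough such mass since $1 - \tilde p_i \geq p_i - \tilde p_i$) to $S \cup \set{i}$. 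Adding an element only raises $f$ and leaves the marginals of the other coordinates unchanged, so after processing all $i$ I obtain a feasible distribution with expected cost at least $\E_{\mathcal{D}'}[f']$, giving $\mathcal{L}(f, V, \set{p_i}) \geq \mathcal{L}(f', V', \set{p_j'})$.

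I expect the marginal mismatch in the reverse direction to be the crux: the pushforward under $\Pi$ can only shrink marginals, and it is precisely the non-decreasing hypothesis on $f$ that lets me restore exact feasibility without lowering the cost. The forward direction, by contrast, is a clean exact-preservation argument that needs no assumption on $f$ at all.
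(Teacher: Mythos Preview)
Your proof is correct and follows the same two-inequality strategy as the paper, with the forward direction (randomized lift, one uniform copy per element) being essentially identical to the paper's construction of $\alpha'$ from $\alpha$. The reverse direction, however, is organized differently. The paper first massages the worst-case distribution on the \emph{split} instance so that no set with positive mass contains two copies of the same item --- it does this by repeatedly moving a redundant copy out of a multi-copy set and into a copy-free set, using monotonicity to argue the objective does not drop --- and only then pushes forward under $\Pi$, at which point the marginals match exactly. You instead push forward immediately, accept that the resulting marginals $\tilde p_i$ may fall short of $p_i$, and then repair them on the \emph{original} instance by shifting mass from $S$ to $S\cup\{i\}$, again invoking monotonicity. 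Both arguments localize the use of the non-decreasing hypothesis to exactly the step where copies collide; your version is arguably cleaner because the repair is a one-line coupling that manifestly preserves all other marginals, whereas the paper's copy-moving argument has to track feasibility on the larger ground set $V'$. The paper's reduction to splitting a single item at a time is a convenience you do not need, since your lift and pushforward handle all items simultaneously.
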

\begin{proof}
Fix an $x$, and write $f(S) = f(x, S)$. We represent each distribution over subsets $S$ of set $V$ by the tuple $\set{\alpha_S}_{S \subseteq V}$. The worst-case expected cost is then the optimal value of the following linear program, where 
\begin{align*}
\mathcal{L}(f, V, \set{p_i}) = \max_{\alpha} &\sum_S \alpha_S f(x, S)\\
s.t &\sum_{S: i \in S} \alpha_S = p_i, \forall i \in V\\
&\sum_S \alpha_S = 1\\
& \alpha_S \geq 0, \forall S \subseteq V
\end{align*}
The first constraint is the marginal probability constraint, and the last two are constraints to make sure that  $\set{\alpha_S}_{S \subseteq V}$ is a probability distribution. The worst case distribution is then the tuple $\set{\alpha_S}_{S \subseteq V}$ that attains the optimal value in the above linear programming problem.\\

Every split operation is, in fact, just the composition of a sequence of split operation that produce copies on a single element of $V$. Thus, w.l.o.g, we can assume that our operation only splits the first item into $n_1$ pieces, and keeps everything else the same. Let ${\alpha_S}$ denote the optimal solution (in the linear programming problem) for the instance $(f, V, \set{p_i})$: $\mathcal{L}(f, V, \set{p_i}) =\sum_S \alpha_S f(S)$. Now we can construct a solution $\set{\alpha'_{S'}}_{S' \subset V'}$ for the corresponding linear program of the new instance $(f, V',\set{p_j'})$ so that its objective value $\sum_{S' \subseteq V'} \alpha_{S'}f(S')$ is the same as $\mathcal{L}(f, V, \set{p_i})$. For all $S' \subseteq V'$, let
$$\alpha'_{S'} = \begin{cases}
\alpha_{S'}, & \text{ if } S' \text{ contains no copies of item } 1\\
\frac{1}{n_1}\alpha_{S'}, & \text{ if } S' \text{ contains exactly one copy of item } 1\\ 
0, & \text{ otherwise}
\end{cases}$$
Note that when $S'$ contains at most $1$ copy of item $1$, then $S'$ can be considered as a subset of $V$ by projecting the only copy $C^1_j$ of item $1$ of $S'$ to $1 \in V$, and so $\alpha_{S'}$ is well-defined in the above definition. We will prove that $\alpha'$ is a feasible solution in its corresponding linear program (the same one as above but we replace $p_1$ by $p_1/n_1$ for each duplicate $C^1_j$ with $j \in \overline{1, n_1}$)\\

For set $S$ not containing $1$, $\alpha$ and $\alpha'$ are the same, and we don't need to care about the $S'$ with more than $2$ duplicates of $1$ because $\alpha'_{S'} = 0$. Now for every $S \subset V$ containing $1$, we have only $\alpha_S$ in the original distribution. But in the distribution on $2^{V'}$, we now have $n_1$ different $\alpha_{S'}$ that are all correspond to the same $\alpha_S$ with $S' = (S\setminus \set{1}) \cup \set{C^1_j}$ for $j \in \overline{1, n_1}$. However, they are all the same and equal to $1/n_1$ of $\alpha_S$. So when we do the sum over $\alpha_S$ or do the sum with weight $f(x, S) = f'(x, S')$ (by the definition of $f'$), the sum stays the same when we go from $\alpha$ to $\alpha'$. Thus, the values of the objective functions of $\alpha$ and $\alpha'$ are the same, and the second constraint is satisfied for the linear program on $\alpha'$. Moreover, the first constraint for linear program on $\alpha'$ is also satisfied because:
$$\sum_{S': C^1_j \in S'}\alpha_S' = \sum_{S:1 \in S}\frac{p_1}{n_1} \alpha_S = \frac{1}{n_1} p_1 = \frac{p_1}{n_1}\ \forall j \in \overline{1, n_1}$$
Thus, $\mathcal{L}(f, V, \set{p_i}) = \sum_{\alpha_S'}\alpha'_{S'}f'(S') \leq \mathcal{L}(f', V', \set{p_j'})$.\\

Now we prove the reverse direction. Consider the optimal solution $\set{\alpha'_{S'}}$ of the new instance. There exists the optimal solution that $\alpha'_{S'} = 0$ for all $S'$ that contain more than one copy of item $1$. To see this, assume for contradiction that some set $S$ with non-zero probability has at least two copies of item $1$. By definition of $f'$, removing one copy will not decrease the function value. So we will move one copy $C^1_j$ of $1$ in this set $S$ to a set $T$ with no copy of $1$. In other word, we set $\alpha_S$ to $0$ and add $\alpha_S$ to $\alpha_{S \setminus \set{C^1_j}}$, and do a similar thing for $T$ and $T \cup \set{C^1_j}$. This moving procedure makes sure that the marginal probability constraint is satisfied, and the new objective value is not decreased. The probability distribution constraint on $\alpha'$ is also satisfied. Therefore, we have a new optimal solution $\alpha'$ so that the total number of duplicates (of $1$) on all sets $S'$, which have $\alpha'_{S'} \neq 0$ and contain more than one duplicate, is decreased. Keep doing this will help us obtain the optimal solution $\alpha'$ such that $\alpha'$ is only non-zero on the sets with at most $1$ duplicate. From here, we can reverse the map we defined before to get a feasible solution $\set{\alpha_S}_{S \subseteq V}$ on the linear program of $(f, V, \set{p_i})$. In fact, we can define $\alpha_S: = \sum_{S': \Pi(S') = S} \alpha'_{S'}$. Then, by a similar argument as in the $\leq$ direction, we obtain the $\geq$ direction.
\end{proof}

\begin{lem}\label{lem9}
If $f$ is non decreasing, then after splitting
$$\mathcal{I}(f', V', {p_j'}) \leq \mathcal{I}(f, V, \set{p_i})$$
\end{lem}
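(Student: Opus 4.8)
The plan is to express $\mathcal{I}(f', V', \set{p_j'})$ as an expectation over the new independent distribution and then reduce it to an expectation over an independent distribution on the original ground set $V$ but with \emph{smaller} marginals. First I would write $\mathcal{I}(f', V', \set{p_j'}) = \E_{S'}[f'(S')] = \E_{S'}[f(\Pi(S'))]$, where $S'$ is drawn from the independent Bernoulli distribution $(\mathcal{D}^{I})'$ on $V'$ that includes each copy $C^i_k$ with probability $p_i/n_i$ independently. The key observation is that $\Pi(S')$, viewed as a random subset of $V$, is itself distributed as an independent (product) distribution on $V$: because the copies of distinct items are chosen independently, the event $\set{i \in \Pi(S')}$ (at least one copy of $i$ is selected) depends only on the copies of $i$, and these events are mutually independent across $i \in V$. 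Its marginal is $q_i := \Prb(i \in \Pi(S')) = 1 - (1 - p_i/n_i)^{n_i}$.

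Next I would compare $q_i$ with $p_i$. By Bernoulli's inequality $(1-y)^n \geq 1 - ny$ applied with $y = p_i/n_i \in [0,1]$ and $n = n_i \geq 1$, one gets $(1 - p_i/n_i)^{n_i} \geq 1 - p_i$, hence $q_i \leq p_i$ for every $i \in V$ and every choice of $n_i \geq 1$. Thus $\Pi(S')$ is an independent distribution on $V$ whose inclusion probabilities are dominated term by term by those of the original independent distribution $\mathcal{D}^I$ for $(f, V, \set{p_i})$.

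Having two product distributions on $V$ with $q_i \leq p_i$ for all $i$, I would finish with a monotone coupling. Draw i.i.d.\ uniforms $U_i$ on $[0,1]$ and set $\tilde{S} = \set{i \in V : U_i \leq q_i}$ and $S = \set{i \in V : U_i \leq p_i}$. Then $\tilde{S} \subseteq S$ pointwise, the law of $\tilde{S}$ is exactly that of $\Pi(S')$, and the law of $S$ is exactly $\mathcal{D}^I$ on $(f, V, \set{p_i})$. Since $f$ is non-decreasing, $f(\tilde{S}) \leq f(S)$ holds surely, so taking expectations yields $\E[f(\Pi(S'))] = \E[f(\tilde{S})] \leq \E[f(S)] = \mathcal{I}(f, V, \set{p_i})$, which is precisely the claimed inequality.

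The crux of the argument — and the only non-routine step — is the pair of facts $q_i = 1 - (1 - p_i/n_i)^{n_i}$ and $q_i \leq p_i$; everything else (the product structure of $\Pi(S')$ and the stochastic-dominance coupling) falls out mechanically once monotonicity of $f$ is used. As in \cref{lem8}, one could equally reduce to splitting a single item and then compose the resulting inequalities, but the coupling handles a simultaneous split of all items at once, so this reduction is optional.
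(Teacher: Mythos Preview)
Your proof is correct, and it shares the paper's central observation: the probability that item $i$ survives the split is $q_i = 1 - (1 - p_i/n_i)^{n_i} \leq p_i$ by Bernoulli's inequality, and then monotonicity of $f$ finishes the job. The packaging, however, differs. The paper reduces to splitting a single item (say item~$1$), conditions on the event $\Lambda = \{S' \text{ contains at least one copy of } 1\}$, and writes
\[
\mathcal{I}(f',V',\{p'_j\}) = \pi\,\E_{S \subseteq V\setminus\{1\}}[f(S\cup\{1\})] + (1-\pi)\,\E_{S \subseteq V\setminus\{1\}}[f(S)]
\]
with $\pi = q_1 \leq p_1$; since $f(S\cup\{1\}) \geq f(S)$, raising $\pi$ to $p_1$ only increases this convex combination, recovering $\mathcal{I}(f,V,\{p_i\})$. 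Your route instead recognizes that $\Pi(S')$ is \emph{globally} a product measure on $V$ with marginals $q_i$, and invokes a single monotone coupling against the product measure with marginals $p_i$. This is cleaner in that it handles a simultaneous split of all items in one stroke and makes the stochastic-dominance structure explicit, whereas the paper's conditioning argument is slightly more elementary (no coupling language needed) but must be iterated item by item. Both arguments are equally valid and of the same difficulty; yours is arguably the more transparent formulation.
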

\begin{proof}
First all probabilities and expectations are taken under independent Bernoulli distribution (over the subsets of $V$ and $V'$). Again, w.l.o.g, we can assume that $(f', V', \set{p_j'})$ is the new instance by splitting item $1$ into $n_1$ pieces because the general split operation is just the composition of the split operation on a single item. Denote
$$\Lambda := \set{S' \subset V'| S' \text{ contains at least one copy of 1}},$$
and denote $\pi = \Prb(S' \in \Lambda)$. Then
\begin{align*}
\pi &= 1- \Prb(S' \text{ doesn't contain } C^1_j\ \forall j \in \overline{1, n_1}) = 1 -\prod_{i=1}^{n_1}\Prb(C^1_j \not \in S')\\
& = 1 -\prod_{i=1}^{n_1}\left(1 - \frac{p_1}{n_1}\right)= 1 - \left(1-\frac{p_1}{n_1}\right)^{n_1}\leq p_1
\end{align*}

Consider the expected cost under independent Bernoulli distribution, we have:
\begin{align*}
\mathcal{I}(f', V', \set{p_j'}) &= \E_{S'}[f'(S')1_{S' \in \Lambda}] + \E_{S'}[f'(S')1_{S' \not\in \Lambda}]\\
&= \pi\E_{S \subseteq V \setminus \set{1}}[f(S\cup \set{1})] + (1-\pi)\E_{S \subseteq V\setminus\set{1} }[f(S)]\\
&\leq \E_{S \subseteq V \setminus \set{1}}[f(S\cup \set{1})] + (1-\pi)\E_{S \subseteq V\setminus\set{1} }[f(S)] = \mathcal{I}(f, V, {p_i})
\end{align*}
The last equality follows from the fact that under independent distribution, the distribution over subsets of $V$ is just the product of the marginal (Bernoulli) distribution of item $1$ with probability $p_1$ and the distribution over subsets of $V \setminus \set{1}$. The second equality also follows from this fact and from the definition of $f'$.
\end{proof}

\begin{lem}\label{lem10}
Given $(\eta, \beta)$ cost-sharing scheme $\chi$ for $(f, V, \set{p_i})$, there exists a cost-sharing scheme $\chi'$ for instance $(f', V', \set{p_i'})$ constructed by splitting in \cref{TBA} such that $\chi'$ is
$\beta$-budget balanced, weak $\eta$-summable. Moreover, suppose that the splitting procedure produces the new instance of the $K$-partition type such that $V'$ is the disjoint union of its subsets $\set{A_k}_{k = 1}^K$, and no $A_k$ contains two copies of the same element in $V$. Then the cost-sharing scheme $\chi'$ constructed from $\chi$ also satisfies cross-monotonicity for any $S' \subset T'$, $\sigma'_{S'}$, and $\sigma'_{T'}$ (and $i' \in S'$) that meet the following two requirements:
\begin{enumerate}
	\item There exist an ordering $\sigma'$ on $V'$ so that elements of $A_{k+1}$ come before $A_k$ with repect to $\sigma'$, and $\sigma_{T'}$ and $\sigma'_{S'}$ are the restricted orderings of $\sigma$ on $S'$ and $T'$. 
	\item $S'$ is a "partial prefix" of $T'$, i.e., $S' \subseteq A_K \cup \cdots \cup A_k$ and $T'\setminus S' \subseteq A_k \subset \cdots \subset A_1$ for some $k \in \overline{1, K}$
\end{enumerate}
\end{lem}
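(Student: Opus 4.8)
The plan is to define $\chi'$ by pushing subsets of $V'$ forward to $V$ through the projection $\Pi$ and letting the leading copy of each original element inherit its whole share. Concretely, given $S' \subseteq V'$ with ordering $\sigma'_{S'}$, I would let $\sigma_{\Pi(S')}$ be the ordering on $\Pi(S')$ that ranks each original element by the position of its earliest copy in $S'$, and set
$$\chi'(C^i_k, S', \sigma'_{S'}) = \begin{cases} \chi(i, \Pi(S'), \sigma_{\Pi(S')}), & C^i_k \text{ is the first copy of } i \text{ in } S',\\ 0, & \text{otherwise.}\end{cases}$$
So the share that $\chi$ allocates to $i$ inside $\Pi(S')$ is handed entirely to the earliest copy of $i$, and later copies receive nothing.

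Budget balance and weak summability then follow by reducing everything to $\chi$ on the projection. Summing $\chi'$ over $S'$ collapses to $\sum_{i \in \Pi(S')}\chi(i,\Pi(S'),\sigma_{\Pi(S')})$, and since $f'(S') = f(\Pi(S'))$ the two-sided $\beta$-budget-balance bound for $\chi$ transfers verbatim. For weak $\eta$-summability the observation is that, scanning $S'$ under $\sigma'_{S'}$, the prefix $S'_l$ contributes a nonzero term exactly when position $l$ is the first appearance of a new original element; at such a step $\Pi(S'_l)$ equals the corresponding prefix of $\Pi(S')$ under $\sigma_{\Pi(S')}$ (with the matching restricted ordering), and these first-appearance steps occur in precisely the order $\sigma_{\Pi(S')}$. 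Hence the summability sum for $\chi'$ on $S'$ equals that for $\chi$ on $\Pi(S')$, which is at most $\eta f(\Pi(S')) = \eta f'(S')$.

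The hard part is cross-monotonicity, which is why it is asserted only under conditions (1)--(2). I would first use the partial-prefix hypothesis to show that first copies are preserved under enlargement. Since every element of $T'\setminus S'$ lies in $A_k\cup\cdots\cup A_1$, it comes after every element of $S'$ in $\sigma'$; the only delicate point is the shared block $A_k$, which is settled by the hypothesis that no $A_k$ holds two copies of a single element. Thus the earliest copy of any $i\in\Pi(S')$ in $S'$ stays its earliest copy in $T'$, so (as in \cref{lem7}) $\Pi(S')\subseteq\Pi(T')$ with $\sigma_{\Pi(S')}$ the restriction of $\sigma_{\Pi(T')}$, and ``first copy in $S'$'' is equivalent to ``first copy in $T'$.'' For a copy $i'\in S'$ there are then two cases: if $i'$ is not the leading copy of its element both sides vanish, and if it is, then $\chi'(i',S',\sigma'_{S'}) = \chi(i,\Pi(S'),\sigma_{\Pi(S')}) \geq \chi(i,\Pi(T'),\sigma_{\Pi(T')}) = \chi'(i',T',\sigma'_{T'})$ by cross-monotonicity of $\chi$. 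The main obstacle is exactly this preservation of leading copies: without the prefix structure a copy of some $i\in\Pi(S')$ sitting in $T'\setminus S'$ could appear earlier than the copy carrying the share in $S'$, which would both reroute the share and disturb the induced ordering, and the reduction to $\chi$ would collapse.
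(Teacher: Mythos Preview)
Your proposal is correct and follows essentially the same route as the paper: you define $\chi'$ by assigning the full share to the first copy under the given ordering and zero to later copies, reduce budget balance and weak summability to the corresponding properties of $\chi$ on $\Pi(S')$, and for cross-monotonicity use the partial-prefix structure together with the ``no two copies in one $A_k$'' hypothesis to show that first copies in $S'$ remain first copies in $T'$, so that the induced orderings nest and the inequality reduces to cross-monotonicity of $\chi$. The paper's proof is organized identically, with the same case split on whether $i'$ is a leading copy.
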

\begin{proof}
Given cost-sharing scheme $\chi$, construct $\chi'$ as follows: cost-share $\chi'$ coincides with the original scheme $\chi$ for the sets without duplicates, but for a set with duplicates , it assign cost-share solely to the copy with smallest index (as per the input ordering). In other word, for $S' \subset V'$, ordering $\sigma'_{S'}$, and item $C^i_j \in S'$, we allocate cost shares as follows:
$$\chi'(C^i_j, S', \sigma'_{S'}) = \begin{cases}
\chi(i, S, \sigma_S), & \quad j = \min\set{h: C^i_h \in S'}\\
0, & \quad \text{ otherwise}
\end{cases}$$
where $S = \Pi(S')$ defined above, $\sigma_S$ is the ordering of loewst index copies in $\sigma'_{S'}$, and $\min$ is taken with respect to the ordering $\sigma'_{S'}$. \\

For $S' \subset V'$, let $S = \Pi(S')$. Consider an ordering $\sigma'_{S'}$, and record all lowest indexed copies in $S'$ of element in $S$ when we tranverse elements of $S'$ based on this ordering. Any two of these copies will be copies of different elements in $S$, and moreover, the order of these lowest indexed copies induces the ordering $\sigma_S$ on $S$. Assume that $i'_{l_j}$ are these lowest-index copies in $S'$ of $i_j \in S$ for $j \in \overline{1, |S|}$. By definition of $\chi'$ and by our choices of $i'_{l_j}$, for all other elements $i'_l \neq i'_{l_j}$ of $S'$, $\chi'(i'_l, S'_l, \sigma_{S'_l}) = 0$. This is because based on the ordering $\sigma_{S'_l}$ on $S'$, which is the induced ordering of $\sigma'_{S'}$ on first $l$ element of $S'$, $S'_l$ already contains at least one copy other than $i'_l$ of some element in $S$. Thus,
$$\sum_{l = 1}^{|S'|} \chi'(i'_l, S'_l, \sigma'_{S'_l}) = \sum_{j = 1}^{|S|} \chi'(i'_{l_j}, S'_{l_j}, \sigma'_{S'_{l_j}}) = \sum_{j = 1}^{|S|}\chi(i_j, S_j, \sigma_{S_j})$$
The second equality follows from the definition of $\chi$. Then because of the weak $\eta$-summability of $\chi$, for any $S' \subset V$, we get:
$$\sum_{l = 1}^{|S'|} \chi'(i'_l, S'_l, \sigma'_{S'_l}) = \sum_{j = 1}^{|S|}\chi(i_j, S_j, \sigma_{S_j}) \leq \eta f(S) = \eta f'(S')$$
The last equality follows from definition of $f'$. Thus, we obtain the weak $\eta$-summability of $\chi'$.\\

By a similar argument with the same choice of lowest-index copies $i'_{l_j}$, for any $S' \subset V'$, and $S = \Pi(S')$, we get that $$\sum_{l = 1}^{|S'|} \chi(i'_l, S', \sigma_{S'}) = \sum_{j = 1}^{|S|} \chi(i'_{l_j}, S', \sigma_{S'}) = \sum_{j=1}^{|S|} \chi(i_j, S, \sigma_{S})$$
Again, by $\beta$-budget balance property of $\chi$ and by the fact that $f'(S') = f(S)$, we obtain the $\beta$-budget balance for $\chi'$ (wrt $f'$).\\

Finally to prove the cross-monotonicity of $\chi'$, consider $S' \subset T', \sigma_{S'} \subset \sigma_{T'}$ such that $S'$ is a "partial prefix" of $T'$, i.e., $S' \subseteq A_K \cup \cdots \cup A_k$, and $T'\setminus S' \subseteq A_k \subset \cdots \subset A_1$ for some $k$. Let $S = \Pi(S')$, $T = \Pi(T')$, and $\sigma_S $ and $\sigma_T$ be the orderings (on $S$ and $T$) of the lowest indexed copies in $S', T'$ repectively. We show that $\sigma_S \subseteq \sigma_T$, i.e., ordering of elements of $S$ in $\sigma_S$ is the same as their ordering in $\sigma_T$. Suppose the lowest indexed copy of $s \in S$ in $S'$ is $s'$. Then by definition, the ordering of $s \in S$ with respect to $\sigma_S$ is determined by the ordering of $s' \in S'$ with respect to $\sigma'$. Now the lowest indexed copy of $s \in S$ in $T'$ is still $s'$ unless we have a lower-indexed copy $s''$ of $s$ in $T' \setminus S'$. But if this happens, then because $S' \subseteq A_K \cup \cdots \cup A_k$, and $T'\setminus S' \subseteq A_k \subset \cdots \subset A_1$, and in $V'$, all elements of $A_{j+1}$ comes before those of $A_j$, $s'$ and $s''$ must be both in $A_k$. However, this is impossible because the splitting procedure makes sure that no two copy of the same element in $V$ appear in the same $A_k$. Thus, the lowest indexed copies of elements in $S$ are unchanged when one goes from $S'$ to $T'$, meaning that the ordering $\sigma_S$ on $S$ is preserved in $\sigma_T$. In other word, $\sigma_S \subseteq \sigma_T$. Now, for any $i' \in S'$, if $i'$ is not a lowest indexed copy of $T'$, then $\chi'(i', T', \sigma_{T'}) = 0$, and cross-monotonicity is automatically satisfied. If $i'$ is one of the lowest indexed copy in $T'$, it must also be a lowest indexed copy in $S'$, because the ordering $\sigma'_{S'}$ on $S'$ is the ordering $\sigma'_{T'}$ on $T'$ restricted to the subset $S' \subseteq T'$. Suppose that $i'$ is the copy of $i \in S$. Then by definition of $\chi'$, $\chi'(i', T', \sigma'_{T'}) = \chi(i, T, \sigma_T)$, and $\chi'(i', S', \sigma'_{S'}) = \chi(i, S, \sigma_S)$. Moreover, because $\sigma_S \subseteq \sigma_T$, the cross-monotonicity of $\chi$ implies $\chi(i, T, \sigma_T) \leq \chi(i, S, \sigma_S)$. Therefore, $\chi'(i', T', \sigma'_{T'}) \leq \chi'(i', S', \sigma'_{S'})$, and the cross-monotonicity of $\chi'$ is proved.
\end{proof}

\begin{rmk}
The concept of correlation gap and its reduction to independent distribution requires the existence of a $(\eta, \beta)$-cost sharing scheme. \cite{agrawal_ding_saberi_ye_2010} indicates that these schemes exists for a few number of problems we discussed before. First, if $f(x, S)$ is non-decreasing and submodular in $S$ then we have the associated $(1, 1)$-cost sharing scheme, so that in this case, the correlation gap is bounded by $\frac{e}{e-1}$. The stochastic \textbf{UFL} problem also have $(\log n, 3)$-cost sharing scheme where $n = |V|$ is the number of clients, and so its correlation gap is bounded by $O(log n)$. Finally, for stochastic Steiner tree, we also have $(\log^2 n, 2)$-cost sharing scheme so that the correlation gap is bounded by $O(log^2 n)$, where $n = |V|$
\end{rmk}

% Bibliography
\nocite{*}
\newpage
\bibliography{citation}{}

\begin{thebibliography}{1}

\bibitem{agrawal_ding_saberi_ye_2010}
Shipra Agrawal, Yichuan Ding, Amin Saberi, and Yinyu Ye.
\newblock Correlation robust stochastic optimization.
\newblock {\em Proceedings of the Twenty-First Annual ACM-SIAM Symposium on Discrete Algorithms}, 2010.

\bibitem{dpo}
Chandrajit Bajaj and Minh Nguyen.
\newblock {DPO}: Differential reinforcement learning with application to optimal configuration search.
\newblock {\em arXiv preprint arXiv:2404.15617}, 2024.

\bibitem{RL_for_CO_2}
Irwan Bello, Hieu Pham, Quoc~V. Le, Mohammad Norouzi, and Samy Bengio.
\newblock Neural combinatorial optimization with reinforcement learning.
\newblock 2017.

\bibitem{gupta_pal_ravi_sinha_2004}
Anupam Gupta, Martin Pal, R.~Ravi, and Amitabh Sinha.
\newblock Boosted sampling.
\newblock {\em Proceedings of the thirty-sixth annual ACM symposium on Theory of computing - STOC '04}, 2004.

\bibitem{RL_for_CO}
Nina Mazyavkina, Sergey Sviridov, Sergei Ivanov, and Evgeny Burnaev.
\newblock Reinforcement learning for combinatorial optimization: A survey.
\newblock {\em Computers \& Operations Research}, 134:105400, 2021.

\bibitem{ravi_sinha_2005}
R.~Ravi and Amitabh Sinha.
\newblock Hedging uncertainty: Approximation algorithms for stochastic optimization problems.
\newblock {\em Mathematical Programming}, 108(1):97–114, 2005.

\bibitem{shmoys_swamy}
D.b. Shmoys and C.~Swamy.
\newblock Stochastic optimization is (almost) as easy as deterministic optimization.
\newblock {\em 45th Annual IEEE Symposium on Foundations of Computer Science}.

\bibitem{swamy_shmoys}
C.~Swamy and D.b. Shmoys.
\newblock Sampling-based approximation algorithms for multi-stage stochastic.
\newblock {\em 46th Annual IEEE Symposium on Foundations of Computer Science (FOCS'05)}.

\end{thebibliography}
\bibliographystyle{plain} 
\end{document}